\newtheorem{rem}[theorem]{Remark}
\def\RR{\mathbb R}
\def\pmatrix{ \left( \begin{array} }
\def\endpmatrix{ \end{array} \right) }
\def\d2dxx{\frac{\partial^2}{\partial x^2}}
\def\no{\noindent}
\def\phi{\varphi}
\def\P{{\cal P}}
\def\A{{\cal A}}
\title{On the existence of energy-preserving symplectic integrators based upon Gauss collocation formulae
\thanks{Work developed within the project ``Numerical methods and software for differential
equations''.}}
\author{Luigi Brugnano\thanks{Dipartimento di Matematica ``U.\,Dini'', Universit\`a di
Firenze, Italy ({\tt luigi.brugnano@unifi.it}).} \and Felice
Iavernaro\thanks{Dipartimento di Matematica, Universit\`a di Bari,
Italy ({\tt felix@dm.uniba.it}).} \and Donato
Trigiante\thanks{Dipartimento di Energetica, Universit\`a di
Firenze, Italy ({\tt trigiant@unifi.it}).}}
\begin{document}

\maketitle

\begin{abstract}
We introduce a new family of symplectic integrators depending on a
real parameter $\alpha$. For $\alpha=0$, the corresponding method in
the family becomes the classical Gauss collocation formula of order
$2s$, where $s$ denotes the number of the internal stages. For any
given non-null $\alpha$, the corresponding method remains symplectic
and has order $2s-2$: hence it  may be interpreted as a
$O(h^{2s-2})$ (symplectic) perturbation of the Gauss method. Under
suitable assumptions, we show that the parameter $\alpha$ may be
properly tuned, at each step of the integration procedure, so as to
guarantee energy conservation in the numerical solution. The
resulting  method shares the same order
 $2s$ as the generating Gauss formula.
\end{abstract}

\begin{keywords}
Hamiltonian systems, collocation Runge-Kutta methods, symplectic
integrators, energy-preserving methods.
\end{keywords}

\begin{AMS}
65P10, 65L05
\end{AMS}

\pagestyle{myheadings} \thispagestyle{plain}

\markboth{L. BRUGNANO, F. IAVERNARO AND D.
TRIGIANTE}{ENERGY-PRESERVING, SYMPLECTIC INTEGRATORS}

\section{Introduction}

We consider  canonical  Hamiltonian systems in the form
\begin{equation}\label{hamilode}
\left\{  \begin{array}{l} \dot y =  J\nabla H(y) \equiv f(y),  \\
y(t_0) = y_0 \in\RR^{2m}, \end{array} \right.
 \qquad J=\pmatrix{rr} 0 & I \\ -I & 0 \endpmatrix \in \RR^{2m \times
 2m},
\end{equation}
($I$ is the identity matrix of dimension $m$). Regarding its
numerical integration, two main lines of investigation may be
traced, having as objective the definition and the study of
symplectic methods and energy-conserving methods, respectively. In
fact, symplecticity and the conservation of the energy function
are the most relevant features characterizing a Hamiltonian
system.

 From the very beginning of this research activity, high order symplectic
formulae were already available within the class of Runge-Kutta
methods,  the Gauss collocation formulae being one noticeable
example. One important implication of symplecticity of the
discrete flow is the conservation of quadratic invariants. This
circumstance makes the symplecticity property of a method
particularly appealing in the numerical simulation of isolated
mechanical systems in the form \eqref{hamilode}, since it provides
a precise conservation of the total angular momentum during the
time evolution of the state vector. As a further positive
consequence, a symplectic method also conserves quadratic
Hamiltonian functions (see the monographs \cite{HLW,LR,SC} for a
detailed analysis of symplectic methods).

On the other hand, excluding the quadratic case, energy-conserving
methods were initially not known within the class of classical
methods and as a matter of facts, among the first attempts to
address this issue, projection and symmetric projection techniques
were coupled to classical non-conservative schemes in order to
impose the numerical solution to lie in a proper manifold
representing a first integral of the original system (see
\cite[Sect. VII.2]{HW}, \cite{AR,H} and \cite[Sect. V.4.1]{HLW}).

A completely new approach is represented by {\em discrete gradient
methods} which are based upon the definition of a discrete
counterpart of the gradient operator so that energy conservation of
the numerical solution is guaranteed at each step and whatever the
choice of the stepsize of integration (see \cite{G,MQR}).

More recently, the conservation of energy has been approached by
means of the definition of the {\em discrete line integral}, in a
series of papers (such as \cite{IP1,IT3}), leading to the definition
of {\em Hamiltonian Boundary Value Methods (HBVMs)} (see for example
\cite{BIT,BIT1}). These are a class of methods able to preserve, in
the discrete solution, polynomial Hamiltonians of arbitrarily high
degree (and hence, a {\em practical} conservation of any
sufficiently differentiable Hamiltonian)\footnote{We refer the
reader to \cite{BIT0} for a complete documentation on HBVMs.}. Such
methods admit a Runge-Kutta formulation which reveals their close
relationship with classical collocation formulae \cite{BIT3}. An
infinity extension of HBVMs has also been proposed in
\cite{Ha,BIT1}.

Attempts to incorporate both symplecticity and energy conservation
into the numerical method will clash with two non-existence
results. The first \cite{GM} refers to non-integrable systems,
that is systems that do not admit  other independent first
integrals different from the Hamiltonian function itself.
According to the authors' words, it states that
\begin{quote} \em
If [the method] is symplectic, and conserved $H$ exactly, then it is
the time advance map for the exact Hamiltonian system up to a
reparametrization of time.
\end{quote}
The second negative result \cite{CFM} refers to B-series symplectic
methods applied to general (not necessarily non-integrable)
Hamiltonian systems:
\begin{quote} \em
The only symplectic method (as $B$-series) that conserves the
Hamiltonian for arbitrary $H(y)$ is the exact flow of the
differential equation.
\end{quote}

The aim of the present work is to devise methods of any high order
that, in a sense that will be specified below and under suitable
conditions, may share both features.
More precisely, we will begin with introducing  a family of one-step
methods
\begin{equation}
\label{met_alpha}
y_1(\alpha)=\Phi_h(y_0,\alpha)
\end{equation}
($h$ is the stepsize of integration), depending on a real parameter
$\alpha$, with the following specifics:
\begin{enumerate}
\item for any fixed choice of $\alpha \not = 0$, the corresponding method is a
symplectic Runge-Kutta method with $s$ stages and of order $2s-2$;
\item for $\alpha=0$ one gets the Gauss collocation method (of order $2s$);
\item for any choice of $y_0$ and in a given range of the stepsize $h$,
there exists a value of the parameter, say $\alpha^\ast$, depending
on $y_0$ and $h$, such that $H(y_1)=H(y_0)$ (energy conservation).
\end{enumerate}
As the parameter $\alpha$ ranges in a small interval centered at
zero, the value of the numerical Hamiltonian function $H(y_1)$ will
match $H(y(t_0+h))$, thus leading to energy conservation. This
result, which will be formally proved in Section \ref{theory}, is
formalized as follows:

\smallskip

\textit{Under suitable assumptions, there exists a real sequence
$\{\alpha_k\}$ such that the numerical solution defined by
$y_{k+1}=\Phi_h(y_{k},\alpha_k)$, with $y_0$ defined in
(\ref{hamilode}), satisfies $H(y_k)=H(y_0)$. }

\smallskip

To clarify this statement and how it relates to the above
non-existence results, we emphasize that the energy conservation
property only applies to the  specific numerical orbit $\{y_k\}$
that the method generates, starting from the initial value $y_0$ and
with stepsize $h$. For example, let us consider the very first step
and assume the existence of a value $\alpha=\alpha_0$, in order to
enforce the energy conservation between the two state vectors $y_0$
and $y_1$, as indicated at item 3 above. If $\alpha_0$ is maintained
constant, the map $y \mapsto \Phi_h(y,\alpha_0)$ is symplectic and,
by definition, assures the energy conservation condition
$H(y_1)=H(y_0)$. However,  it would fail to provide a conservation
of the Hamiltonian function if we changed the initial condition
$y_0$ or the stepsize $h$: in general, for any $\hat y_0 \not =y_0$,
we would obtain $H(\Phi_h(\hat y_0,\alpha_0)) \not = H(y_0)$. Thus,
the energy conservation property we are going to discuss weakens the
standard energy conservation condition mentioned in the two
non-existence results stated above and hence, by no means, the new
methods are  meant to produce a counterexample of these statements.

The paper is organized as follows. In the next section we report
the definition of the methods while in Section \ref{colloc} we
show their geometrical link with Gauss collocation formulae. In
Section \ref{theory} we face the problem from a theoretical
viewpoint and give some existence results that aim to explain the
energy-preserving property of the new methods. In Section
\ref{numerical_tests} we report a few
 tests that give a clear numerical evidence that a change
in sign of the function $g(\alpha)=H(y_1(\alpha))-H(y_0)$ does
indeed occur along the integration procedure.

\section{Definition of the methods}
\label{definition} Let $c_1<c_2<\dots<c_s$ and $b_1,\dots, b_s$ be
the abscissae and the weights of the Gauss-Legendre quadrature
formula in the interval $[0,1]$. We consider the Legendre
polynomials $P_j(\tau)$, of degree $j-1$ for $j=1,\dots,s$, shifted
and  normalized in the interval $[0,1]$, that is
\begin{equation}
\label{orth} \int_0^1 P_i(\tau)P_j(\tau) \mathrm{d} \tau =
\delta_{ij}, \qquad i,j=1,\dots,s,
\end{equation}
($\delta_{ij}$ is the Kronecker symbol), and the matrix
\begin{equation}
\label{P} \P = \pmatrix{cccc}
P_1(c_1) & P_2(c_1) & \cdots & P_s(c_1) \\
P_1(c_2) & P_2(c_2) & \cdots & P_s(c_2) \\
\vdots   & \vdots   &        & \vdots \\
P_1(c_s) & P_2(c_s) & \cdots & P_s(c_s)
\endpmatrix_{s \times s}.
\end{equation}
Our starting point is the following decomposition of the Butcher
array $A$ of the Gauss method of order $2s$ (see \cite[Theorem
5.6]{HW}):
\begin{equation}\label{A}
A= \P X_s \P^{-1},
\end{equation}
where $X_s$ is defined as
\begin{equation}\label{Xs}
X_s = \pmatrix{cccc}
\frac{1}2 & -\xi_1 &&\\
\xi_1     &0      &\ddots&\\
          &\ddots &\ddots    &-\xi_{s-1}\\
          &       &\xi_{s-1} &0\\
\endpmatrix, \end{equation}
\no with
\begin{equation}
\label{xij}\xi_j=\frac{1}{2\sqrt{(2j+1)(2j-1)}}, \qquad
j=1,\dots,s-1.\end{equation}

We now consider the matrix $X_s(\alpha)$ obtained by perturbing
\eqref{Xs} as follows:
\begin{equation}\label{Xs_alpha}
X_s(\alpha) = \pmatrix{cccc}
\frac{1}2 & -\xi_1 &&\\
\xi_1     &0      &\ddots&\\
          &\ddots &\ddots    &-(\xi_{s-1}+\alpha)\\
          &       &\xi_{s-1}+\alpha &0\\
\endpmatrix = X_s + \alpha W_s,\end{equation}
where $\alpha$ is a real parameter, and
\begin{equation}\label{Ws}
W_s=\pmatrix{cccc}
0 & 0 &&\\
0     &0      &\ddots&\\
          &\ddots &\ddots    &-1\\
          &       &1 &0\\
\endpmatrix,
\end{equation}
so that $X_s(\alpha)$ is a rank two perturbation  of $X_s$.

The family of methods  $y_1=\Phi_h(y_0,\alpha)$ we are interested
in, is defined by the following tableau:
\begin{equation}
\label{qgauss}
\begin{array}{c|c}\begin{array}{c} c_1\\ \vdots\\ c_s\end{array} & \A(\alpha) \equiv \P X_s(\alpha)\P^{-1}\\
 \hline                    &b_1\, \ldots \ldots ~ b_s
\end{array}
\end{equation}
Therefore
\begin{equation}
\label{BA}
\A(\alpha)= A + \alpha \P W_s \P^{-1},
\end{equation}
and hence $\A(0)=A$.

By exploiting Theorems~5.11 and 5.1 in \cite[Chap. IV.5]{HW}, we
readily deduce that the symmetric method \eqref{qgauss} has order
$2s-2$ for any fixed $\alpha \not = 0$, and order $2s$ when
$\alpha=0$.

We set
\begin{equation}\label{Omega}
\omega =\pmatrix{c} b_1 \\ \vdots \\ b_s \endpmatrix, \qquad \Omega
=\pmatrix{ccc} b_1 \\ & \ddots \\&&  b_s \endpmatrix, \qquad e=
\pmatrix{c} 1 \\ \vdots \\ 1 \endpmatrix.
\end{equation}

\begin{theorem}
\label{symplecticity} For any value of $\alpha$,  the Runge-Kutta
method defined in \eqref{qgauss} is symplectic.
\end{theorem}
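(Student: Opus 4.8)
The plan is to verify the standard algebraic characterization of symplecticity for Runge--Kutta methods, namely that the tableau \eqref{qgauss} is symplectic if and only if
\begin{equation*}
M(\alpha):=\Omega\,\A(\alpha)+\A(\alpha)^T\Omega-\omega\omega^T=0,
\end{equation*}
which in entrywise form is the familiar condition $b_ia_{ij}+b_ja_{ji}=b_ib_j$. Rather than expanding $\A(\alpha)=A+\alpha\,\P W_s\P^{-1}$ and checking the base case and the perturbation separately, I would conjugate the whole expression by $\P$ and work directly with the compact factor $X_s(\alpha)$, so that symplecticity for every $\alpha$ follows from a single structural observation about $X_s(\alpha)$.

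The computation hinges on two identities relating $\P$, $\Omega$ and $\omega$. First, since the $P_j$ are orthonormal in the sense of \eqref{orth} and the Gauss formula integrates exactly every product $P_iP_j$ (a polynomial of degree at most $2s-2\le 2s-1$), one has
\begin{equation*}
(\P^T\Omega\,\P)_{ij}=\sum_{k=1}^s b_k\,P_i(c_k)P_j(c_k)=\int_0^1 P_i(\tau)P_j(\tau)\,\dd\tau=\delta_{ij},
\end{equation*}
that is $\P^T\Omega\,\P=I$, equivalently $\Omega=\P^{-T}\P^{-1}$ and $\Omega\,\P=\P^{-T}$. Second, the normalized Legendre polynomial of degree zero is $P_1\equiv 1$, so the first column of $\P$ is $e$; writing $e_1=(1,0,\dots,0)^T$ for the first canonical vector this reads $\P e_1=e$, whence $\omega=\Omega e=\Omega\,\P e_1=\P^{-T}e_1$.

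Using $\Omega\,\P=\P^{-T}$ and $\P^T\Omega=\P^{-1}$ to push the outer factors of $\Omega$ through, together with $\omega=\P^{-T}e_1$, the three terms of $M(\alpha)$ collapse to a single conjugation,
\begin{equation*}
M(\alpha)=\P^{-T}\bigl(X_s(\alpha)+X_s(\alpha)^T-e_1e_1^T\bigr)\P^{-1}.
\end{equation*}
It then remains to read off from \eqref{Xs_alpha}--\eqref{Ws} that $X_s(\alpha)=\tfrac12\,e_1e_1^T+S(\alpha)$, where $S(\alpha)$ is the tridiagonal part carrying the entries $\pm\xi_j$ and $\pm(\xi_{s-1}+\alpha)$ and is therefore skew-symmetric; in particular $W_s$ is skew-symmetric, so the $\alpha$-dependence sits entirely in the skew part. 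Hence $X_s(\alpha)+X_s(\alpha)^T=e_1e_1^T$ for every $\alpha$, the bracket vanishes, and $M(\alpha)=0$. The only genuinely delicate point is the orthogonality relation $\P^T\Omega\,\P=I$: it is where the choice of Gauss nodes and weights (exactness up to degree $2s-1$) enters, and everything else is the skew/symmetric bookkeeping that makes the perturbation $\alpha W_s$ invisible to the symplecticity condition.
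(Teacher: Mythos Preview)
Your proof is correct and uses the same two ingredients as the paper—the discrete orthogonality $\P^T\Omega\,\P=I$ and the skew-symmetry of $W_s$—but packages them differently. The paper expands $\A(\alpha)=A+\alpha\,\P W_s\P^{-1}$ and treats the two pieces separately: it quotes the known symplecticity of the Gauss method for the $A$ part, and then uses $\P^{-1}=\P^T\Omega$ together with $W_s+W_s^T=0$ to show the $\alpha$-term contributes nothing. Your route instead conjugates the full condition by $\P$ in one stroke and reduces everything to the structural identity $X_s(\alpha)+X_s(\alpha)^T=e_1e_1^T$; the extra relation $\omega=\P^{-T}e_1$ (from $P_1\equiv 1$) is what makes this work. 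The payoff of your version is that it is self-contained—you do not need to invoke symplecticity of Gauss as an external fact, since the case $\alpha=0$ is handled simultaneously—whereas the paper's version is a touch more modular and makes explicit that the perturbation is ``invisible'' precisely because $W_s$ is skew.
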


\begin{proof}
On the basis of \cite[Theorem 4.3, page 192]{HLW}, we will prove the
following sufficient condition for symplecticity:
$$
\Omega \A(\alpha) + \A(\alpha)^T \Omega = \omega \, \omega^T.
$$
Since the degree of the integrand functions in  \eqref{orth} does
not exceed $2s-2$, the ortho\-gonality conditions may be
equivalently posed in discrete form as
$$
\sum_{k=1}^s b_k P_i(c_k)P_j(c_k) = \delta_{ij}, \qquad
i,j=1,\dots,s,
$$
or, in matrix notation,
\begin{equation}
\label{orth1} \P^T \Omega P = I.
\end{equation}
Considering that from \eqref{orth1} we get $\P^{-1}= \P^T \Omega$,
from \eqref{BA} we have that
\begin{equation}
\label{hint} \Omega \A(\alpha) + \A(\alpha)^T \Omega = \Omega A +
A^T \Omega + \alpha \Omega \P ( W_s + W_s^T) \P^T \Omega = \omega \,
\omega^T
\end{equation}
since the Gauss method is symplectic, and $W_s$ is skew-symmetric
so that $W_s+W_s^T=0$. \qquad
\end{proof}

In the event that  a value $\alpha^\ast \equiv \alpha^\ast(y_0,h)$
for the parameter $\alpha$ may be found such that the conservation
condition $ H(y_1(\alpha))=H(y_0)$ be satisfied, we can extrapolate
from the parametric method \eqref{qgauss} a  symplectic scheme
\begin{equation}
\label{epgauss} y \mapsto \Phi_h(y,\alpha^\ast),
\end{equation}
that provides energy conservation if evaluated at $y_0$. The
existence of such an $\alpha^\ast$ will be proved in Section
\ref{theory}. One important implication the use of \eqref{epgauss}
will guarantee is the conservation of all quadratic constant of
motions associated with system \eqref{hamilode}. The fact that, in
general, $H(\Phi_h(y,\alpha^\ast))\not = H(y)$, explains the
extent to which the energy conservation property of the new
formulae must be interpreted. Summarizing, the new formulae, when
applied to the initial value system \eqref{hamilode} are able to
define a numerical approximation of any high order, along which
the Hamiltonian function and all quadratic first integrals of the
system are precisely conserved.

\subsection{Generalizations}
The proof of Theorem \ref{symplecticity} suggests how to extend the
definition of the new formulae in order to get a
 family of methods depending on a set of parameters. Indeed, by
 looking at \eqref{hint}, in order to preserve symplecticity,
it is sufficient to substitute to the matrix $\alpha W_s$, any
skew-symmetric matrix $\widetilde W_s$ of low rank, having non-null
elements in the bottom-right corner so that, with respect to the
Gauss method, the order is lowered as least as possible.
\begin{theorem}
 Consider the $s\times s$ matrix
$$
\widetilde W_s= \pmatrix{ll} 0 \\ & V_r \endpmatrix
$$
where $V_r$ is any skew-symmetric matrix of dimension $r+1<s$. The
Runge-Kutta method defined by the Butcher tableau
\begin{equation}
\label{qgauss1}
\begin{array}{c|c}\begin{array}{c} c_1\\ \vdots\\ c_s\end{array} & \A(\alpha) \equiv \P (X_s+\widetilde W_s)\P^{-1}\\
 \hline                    &b_1\, \ldots \ldots ~ b_s \end{array}
\end{equation}
in \eqref{qgauss} is symplectic and has order $p=2(s-r)$.
\end{theorem}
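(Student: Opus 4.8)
The plan is to treat the two assertions separately: symplecticity is obtained by copying the computation in the proof of Theorem~\ref{symplecticity} verbatim, while the order is recovered from the two cited results of \cite[Chap.~IV.5]{HW} once I locate where the perturbation $\widetilde W_s$ first corrupts the transformed matrix. For symplecticity I would write $\A(\alpha)=\P(X_s+\widetilde W_s)\P^{-1}$, use the identity $\P^{-1}=\P^T\Omega$ coming from \eqref{orth1}, and obtain as in \eqref{hint}
$$
\Omega\A(\alpha)+\A(\alpha)^T\Omega=\Omega A+A^T\Omega+\Omega\P(\widetilde W_s+\widetilde W_s^T)\P^T\Omega .
$$
The first two terms sum to $\omega\,\omega^T$ because the Gauss method is symplectic, and the last term vanishes because $\widetilde W_s$ is skew-symmetric (its only non-trivial block $V_r$ is skew-symmetric by hypothesis and the complementary block is zero). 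Thus the sufficient condition of \cite[Theorem 4.3]{HLW} holds; equivalently, in the transformed variables symplecticity reads $X+X^T=e_1e_1^T$, a relation manifestly preserved by every skew-symmetric perturbation of $X_s$.

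For the order the decisive structural remark is that $\widetilde W_s$ vanishes identically on its first $s-r-1$ columns (and, being skew, on its first $s-r-1$ rows), since the block $V_r$ occupies only the indices $s-r,\dots,s$. Hence $X=X_s+\widetilde W_s$ coincides with the Gauss matrix $X_s$ on those first $s-r-1$ columns. Reading this back through $\A(\alpha)=\P X\P^{-1}$ and recalling that the columns of $\P$ carry the values of the Legendre basis at the nodes, the equality of these columns is precisely the simplifying assumption $C(s-r-1)$, i.e.\ the stages integrate every polynomial of degree up to $s-r-2$ exactly on each subinterval $[0,c_i]$. The first corrupted column is column $s-r$, whose subdiagonal entries are shifted by the first column of $V_r$; so $C(s-r)$ fails as soon as $V_r$ has a non-zero first column, which I assume (otherwise the perturbation lives in a smaller corner and the order is correspondingly higher).

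It then remains to convert $C(s-r-1)$, together with symplecticity and the quadrature order $B(2s)$ inherited unchanged from the Gauss nodes and weights, into the order $2(s-r)$. Here I invoke Theorems~5.1 and~5.11 of \cite[Chap.~IV.5]{HW} exactly as in the paragraph following \eqref{BA}: for a symplectic method the assumption $C(q)$ alone forces the order to be $\min(2q+2,\,p)$ with $B(p)$ the quadrature order, because the relation $X+X^T=e_1e_1^T$ pairs each residual order condition with its reflected partner and thereby lifts the plain Butcher estimate $2q+1$ to the even value $2q+2$. With $q=s-r-1$ and $p=2s$ this gives order $\min(2(s-r),2s)=2(s-r)$, and the failure of $C(s-r)$ shows the order is exactly $2(s-r)$.

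The step I expect to be the genuine obstacle is this last lift from $2q+1$ to $2q+2$. For the one-parameter family \eqref{qgauss} the perturbation $\alpha W_s$ keeps the method \emph{symmetric}, so evenness of the order is automatic and the boost costs nothing; but a general skew-symmetric $V_r$ destroys symmetry — it creates entries $(\widetilde W_s)_{ij}$ with $i+j$ even, violating the relation $X+EXE=e_1e_1^T$ with $E=\mathrm{diag}(1,-1,1,\dots)$ — so evenness can no longer be taken for granted and the extra half-order must be extracted from symplecticity alone. Verifying that every order condition of order $2q+2$ is indeed met once $C(q)$ and the symplectic relation hold is therefore the crux of the proof; the remainder is the routine bookkeeping already performed for Theorem~\ref{symplecticity}.
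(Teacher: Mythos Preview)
The paper states this theorem without proof: the only argument it offers is the sentence preceding the statement, which says that the computation \eqref{hint} goes through for any skew-symmetric $\widetilde W_s$. Your symplecticity paragraph reproduces exactly that, so on this half there is nothing to compare.

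For the order the paper is silent, and your analysis is in fact more careful than what the paper provides. You are right that the block structure forces the first $s-r-1$ columns and rows of $X_s+\widetilde W_s$ to coincide with those of $X_s$, whence $C(s-r-1)$ and $D(s-r-1)$; together with $B(2s)$, Theorem~IV.5.1 of \cite{HW} then yields order at least $2(s-r)-1$. You are also right that the extra unit cannot be obtained from symmetry in general: a skew-symmetric $V_r$ with a nonzero entry in position $(i,j)$ with $i+j$ even (possible as soon as $r\ge2$) violates $EXE+X=e_1e_1^T$, so the method need not be symmetric and the paper's earlier appeal to ``the symmetric method \eqref{qgauss}'' does not transfer.

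Your proposed replacement---symplecticity in lieu of symmetry---does close the gap. From $b_ia_{ij}+b_ja_{ji}=b_ib_j$ one first recovers $D(\eta)$ from $C(\eta)$, and then the single order condition at level $2\eta+2$ not already implied by $B(2\eta+2)$, $C(\eta)$, $D(\eta)$ (the one attached to the tall tree $[\tau^{\eta},[\tau^{\eta}]]$, i.e.\ $\sum_{i,j}b_ic_i^{\eta}a_{ij}c_j^{\eta}=\frac{1}{2(\eta+1)^2}$) follows by symmetrising the sum with the symplecticity relation and invoking $B(2\eta+2)$. With $\eta=s-r-1$ this gives order $2(s-r)$, and the failure of $C(s-r)$ when $V_r$ has a nonzero first column shows the order is sharp. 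So the step you flagged as the obstacle is genuine but surmountable, and your outline is a complete proof once that verification is inserted; the paper itself simply asserts the result.
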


For example, a natural choice for the matrix $\widetilde W_s$ is:
\begin{equation}
\label{tildeWs} \widetilde W_s=\pmatrix{cccccc}
0 & 0 &&\\
0     &0      &\ddots&\\
          &\ddots &\ddots    &-\alpha_1\\
          &       &\alpha_1 &0 & \ddots \\
          &       &  & \ddots & \ddots & -\alpha_r\\
          &       &  &       & \alpha_r & 0
\endpmatrix
\end{equation}
leading to a multi-parametric method depending on the $r$ parameters
$\alpha_1,\dots,\alpha_r$.

\section{Quasi-collocation conditions}\label{colloc}
Condition \eqref{BA} reveals the relation between the Butcher arrays
associated with the new parametric method and the Gauss collocation
method. In order not to loose generality, just in this subsection we
assume to solve the generic problem $\dot y = f(y)$.

We wander how the collocation conditions defining the Gauss methods
are affected by the presence of the parameter $\alpha$. This is
easily accomplished by expressing the coefficients of the perturbing
matrix $\P W_s \P^{-1}$ in terms of linear combinations of the
integrals $\int_0^{c_i} l_j(\tau)\mathrm{d} \tau$, where $l_j(\tau)$
is the $j$th Lagrange polynomial defined on the abscissae
$c_1,\dots,c_s$. Let $\Gamma\equiv\left(\gamma_{ij}\right)$ be the
solution of the matrix linear system $A \, \Gamma = \P W_s \P^{-1}$,
which means that (see \eqref{A})
\begin{equation}
\label{Gamma} \Gamma = \P X_s^{-1} W_s \P^{-1}.
\end{equation}
The nonlinear system defining the block vector of the internal
stages $\{Y_i\}$ is
$$
Y = e \otimes y_0 + h (A\otimes I) F(Y) + \alpha h (A\,\Gamma
\otimes I) F(Y),
$$
where $e$ is the vector defined in (\ref{Omega}), hereafter $I$ is
the identity matrix of dimension $2m$, and
$$Y=\pmatrix{ccc} Y_1^T & \dots & Y_s^T \endpmatrix^T, \qquad
F(Y)=\pmatrix{ccc} f(Y_1)^T & \dots & f(Y_s)^T \endpmatrix^T.$$
Therefore, the polynomial $\sigma(t_0+\tau h)$ of degree $s$ that
interpolates the stages $Y_i$ at the abscissae $c_i$,
$i=1,\dots,s,$ is
\begin{equation}
\label{sigma} \displaystyle  \sigma(t_0+\tau h) = y_0 + h
\sum_{j=1}^s \int_0^\tau l_j(x)\mathrm{d}x \, f(Y_j) + \alpha h
\sum_{j=1}^s  \left( \sum_{k=1}^s \gamma_{kj} \int_0^\tau
l_k(x)\mathrm{d}x \right)\, f(Y_j).
\end{equation}
Differentiating \eqref{sigma} with respect to $\tau$ gives
\begin{equation}
\label{sigma_dot}
\displaystyle \dot \sigma(t_0+\tau h) =
\sum_{j=1}^s l_j(\tau)\, f(\sigma(t_0+c_j h)) + \alpha \sum_{j=1}^s
\left( \sum_{k=1}^s \gamma_{kj} l_k(\tau) \right)\, f(\sigma(t_0+c_j
h)).
\end{equation}
Finally, evaluating \eqref{sigma} at $\tau=0$ and \eqref{sigma_dot}
at $\tau=c_i$ yields
\begin{equation}
\label{coll} \left\{ \begin{array}{l}\displaystyle \sigma(t_0)  =
y_0,
\\ \displaystyle \dot \sigma(t_0+c_i h)  = f(\sigma(t_0+c_i h)) +
\alpha \sum_{j=1}^s \gamma_{ij} \, f(\sigma(t_0+c_j h)), \qquad
i=1,\dots,s. \end{array}
\right.
\end{equation}
For $\alpha$ small, we can regard \eqref{coll} as {\em
quasi-collocation conditions}, since for $\alpha=0$ we recover the
classical collocation conditions defining the Gauss method.

\subsection{Geometric interpretation}
Let us assume the existence of a quadratic first integral $M(y)$
independent from $H(y)$: although this assumption is not strictly
needed,
 it will somehow simplify the presentation of our argument.
\begin{figure}[htb]
\centerline{\includegraphics[width=12cm,height=7cm]{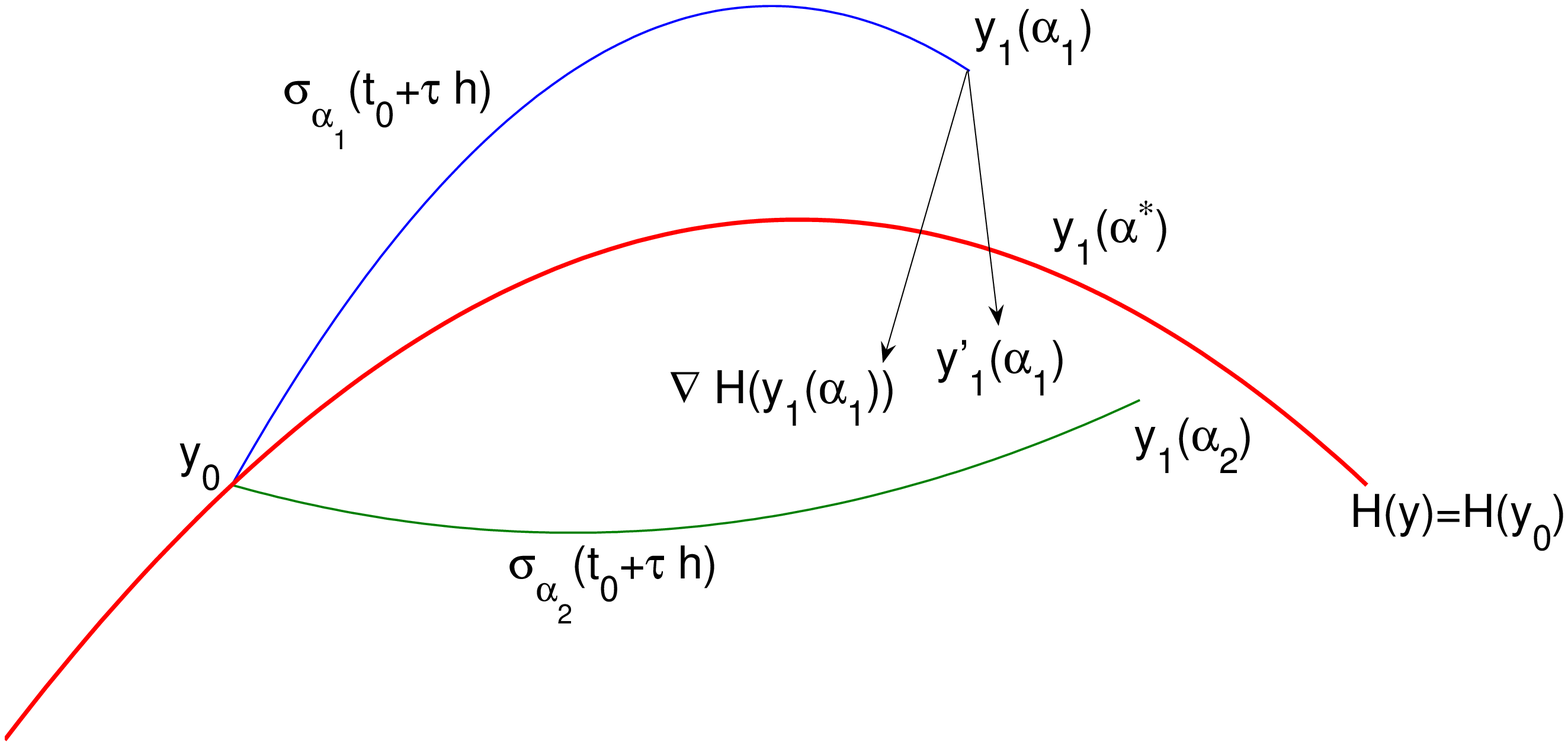}}
\caption{A geometric interpretation of the parametric method
\eqref{qgauss}. Two quasi-collocation polynomials
$\sigma_{\alpha_1}(t_0+\tau h)$ and $\sigma_{\alpha_2}(t_0+\tau h)$
have as end-points the numerical solutions $y_1(\alpha_1)$ and
$y_1(\alpha_2)$ which are $O(h^{2s-1})$ close to the Hamiltonian
since, for $\alpha \not = 0$ the method has order $2s-2$. This means
that the length of the arc of curve enclosed by the points
$y_1(\alpha_1)$ and $y_1(\alpha_2)$ is $O(h^{2s-1})$. However, the
parametric curve $\gamma: \alpha\in [\alpha_1,\alpha_2] \mapsto
y_1(\alpha)$ passes through $y_1(0)$ which is at a distance
$O(h^{2s+1})$ from the manifold $H(y)=H(y_0)$, and there is a
concrete possibility that this arc may intersect the manifold
$H(y)=H(y_0)$ at a point $y_1(\alpha^\ast)$.} \label{sliding}
\end{figure}

Roughly speaking, for $\alpha$ small, our {\em parametric} method
may be interpreted as a symplectic perturbation of the Gauss
method. Due to symplecticity of $\Phi_h(\cdot,\alpha)$, the
parametric curve
\begin{equation}
\label{gamma} \gamma \equiv \alpha \in D \mapsto y_1(\alpha)\in
\RR^{2m},
\end{equation}
where $D$ is a given interval containing zero, will entirely lie
in the manifold $M(y)=M(y_0)$ and its length will be
$O(h^{2s-1})$, since the method has order $2s-2$. However, the
numerical solution produced by the Gauss method, namely $y_1(0)$
will be $O(h^{2s+1})$ close to the manifold $H(y)=H(y_0)$. Since
the two manifolds contain the continuous solution, their
intersection is nonempty and it is reasonable to expect that when
$\alpha$ ranges in $D$, $y_1(\alpha)$ can slide from a region
where $H(y_1(\alpha))>H(y_0)$ to a region where
$H(y_1(\alpha))<H(y_0)$, thus producing a sign change in the
scalar function
\begin{equation}
\label{g} g(\alpha)= H(y_1(\alpha))-H(y_0)
\end{equation}
which, by continuity, will vanish  at a point $\alpha^\ast$.

Obviously, similar arguments can be repeated for the multi-parameter
version (\ref{qgauss1}) of the method, where one has even more
freedom in the choice of the parameters in the matrix $\widetilde
W_s$ defined in \eqref{tildeWs}, in order to obtain the conservation
of energy.\footnote{We do not consider multi-parametric methods in
the numerical results we present, since a single parameter suffices
in getting the energy conservation property.}

\section{Theoretical existence results}
\label{theory} After defining the error function $g(\alpha)=
H(y_1(\alpha))- H(y_0)$, the nonlinear system, in the unknowns
$Y_1,\dots,Y_s$ and $\alpha$, that is to be solved at each step for
getting energy conservation, reads
\begin{equation}
\label{concon} \left\{ \begin{array}{l} Y = e \otimes y_0 + h
(\A(\alpha) \otimes I) F(Y), \\
g(\alpha) =0,
\end{array}
\right.
\end{equation}
and its solvability is equivalent to the existence  of the energy
preserving method \eqref{epgauss} we are looking for. After defining
the vector function
$$
G(h,y_1,\alpha)=\pmatrix{c} y_1-\Phi_h(y_0,\alpha) \\[.2cm]
H(y_1)-H(y_0) \endpmatrix,
$$
we see that system \eqref{concon} is equivalent to
$G(h,y_1,\alpha)=0$. Of course $G(0,y_1,\alpha)=0$ for any value of
$\alpha$ and, in particular $G(0,y_1,0)=0$. The Jacobian of $G$ with
respect to the two variables $y_1$ and $\alpha$ reads
$$
\frac{\partial G}{\partial(y_1,\alpha)} (h,y_1,\alpha) =
\pmatrix{cc} I &
\frac{\partial \Phi_h}{\partial \alpha}(y_0,\alpha) \\[.2cm]
\nabla^TH(y_1) & 0 \endpmatrix,
$$
where, as usual, $I$ is the identity matrix of dimension $2m$.  From
\eqref{epgauss} we see that $\frac{\partial \Phi_h}{\partial
\alpha}(y_0,\alpha)$ coincides with $y'_1(\alpha)$ and, hence, with
$\sigma_{\alpha}'(t_0+h)$.  Due to the consistency of the method, it
follows that, for $\alpha=0$, $\sigma_{\alpha}'(t_0+h) \rightarrow J
\nabla H(y_0)$ as $h\rightarrow 0$. Therefore
\begin{equation}
\label{Gjac} \frac{\partial G}{\partial(y_1,\alpha)} (0,y_1,0) =
\pmatrix{cc} I &
J \nabla H (y_0) \\[.2cm]
\nabla^TH(y_0) & 0 \endpmatrix.
\end{equation}
Unfortunately, the Jacobian matrix \eqref{Gjac} is always singular.
Consequently, the implicit function theorem (in its classical
formulation) does not help in retrieving existence results of the
solution of \eqref{concon} when $h$ is small. However, the rank of
the matrix \eqref{Gjac} is $2m$ independently of the problem to be
solved. This would suggest the use of the Lyapunov-Schmidt
decomposition \cite{SLSF} that considers the restriction of the
system to both the complement of the null space and the range of the
Jacobian, to produce two systems to which the implicit function
theorem applies.

In our case this approach is simplified in that the implicit
function theorem assures the existence of a solution $Y(\alpha)$ of
the first system in \eqref{concon} for all values of the parameter
$\alpha$ ranging in a closed interval containing the origin and $|h|
\le h_0$, with $h_0$ small enough. Then $y_1(\alpha)=y_0+h(b^T
\otimes I)Y(\alpha)$ is substituted into the second of
\eqref{concon} to produce the so called {\em bifurcation equation}
in the unknown $\alpha$. When needed, we will explicitly write
$g(\alpha, h)$ or $g(\alpha, h, y_0)$, in place of $g(\alpha)$, to
emphasize the dependence of the function $g$ upon the stepsize $h$,
that has to be treated as a parameter, and the state vector $y_0$.

Let us fix a vector $y_0$ and look for solution curves of
$g(\alpha,h)=0$ in the $(h,\alpha)$ plane. Obviously $g(\alpha,0)=0$
for any $\alpha$, which means that the axis $h=0$ is a solution
curve of the bifurcation equation: of course, we are interested in
the existence of a different solution curve
$\alpha^\ast=\alpha^\ast(h)$ passing through the origin. Since the
gradient of $g$ vanishes at $(0,0)$, one has to compute the
subsequent partial derivatives of $g$ with respect to $\alpha$ and
$h$. However one verifies that $\frac{\partial^2 g}{\partial h^2} =
\frac{\partial^2 g}{\partial \alpha^2}=\frac{\partial^2 g}{\partial
\alpha \partial h}$ evaluated at $(0,0)$ vanish as well, and this
makes the computations even harder. For this reason, to address the
question about the existence of a solution of \eqref{concon}, we
make the following assumptions:
\begin{itemize}
\item[($\mathcal A_1$)] the function $g$ is analytical in a rectangle $[-\bar \alpha, \bar \alpha] \times [-\bar
h, \bar h]$ centered at the origin;
\item[($\mathcal A_2$)] let $d$ be the order of the error in the Hamiltonian function
associated with the Gauss method applied to the given Hamiltonian
system \eqref{hamilode} and the given state vector $y_0$, that is:
\begin{equation}
\label{gH} g(0,h) = H(y_1(0))- H(y_0) = c_0 h^{d} + O(h^{d+1}),
\end{equation}
with $c_0 \not = 0$. Then, we assume that for any fixed $\alpha
\not = 0$, $$g(\alpha,h) = c(\alpha) h^{d-2} + O(h^{d-1}),$$ with
$c(\alpha) \not =0$.
\end{itemize}
\begin{rem}
A couple of quick comments are in order before continuing. Excluding
the case where the Hamiltonian $H(q,p)$ is quadratic (which would
imply $g(\alpha,h) = 0$ for all $alpha$), the error  in the
numerical Hamiltonian function associated with the Gauss method is
expected to behave as $O(h^{2s+1})$. Anyway, we cannot exclude a
priori that special classes of problems or particular values for the
state vector $y_0$ may occur, for which the order of convergence may
be even higher. This is why we have introduced the integer $d$:
therefore such integer will be at least $2s+1$. Moreover, we
emphasize that the constant $c_0$ and the function $c(\alpha)$, will
depend on $y_0$. In conclusion, what we are assuming is that for the
method \eqref{qgauss}, when $\alpha$ is a given nonzero constant,
the order of the error $H(y_1(\alpha))-H(y_0)$ is lowered by two
units with respect to the underlying Gauss method of order $2s$,
which is a quite natural requirement since such method has order
$2s-2$.
\end{rem}

\begin{theorem} \label{implicit}
Under the assumptions ($\mathcal A_1$) and ($\mathcal A_2$), there
exists a function $\alpha^\ast=\alpha^\ast(h)$, defined in a
neighborhood of the origin  $(-h_0,h_0)$,   such that:
\begin{itemize}
\item[(i)] $g(\alpha^\ast(h),h)=0$, for all $h\in(-h_0,h_0)$,
\item[(ii)]$\alpha^\ast(h)=\mathrm{const}\cdot h^2 + O(h^3)$.
\end{itemize}
\end{theorem}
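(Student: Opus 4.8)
The plan is to reduce the bifurcation equation $g(\alpha,h)=0$ to a regular one, to which the analytic implicit function theorem applies, after dividing out the trivial solution curve $h=0$. By assumption $(\mathcal A_1)$ we may expand $g$ in a convergent double power series $g(\alpha,h)=\sum_{i,j\ge 0}a_{ij}\,\alpha^i h^j$ near the origin. The identity $g(\alpha,0)=0$ forces $a_{i0}=0$ for every $i$, while $(\mathcal A_2)$ tells us that for each fixed $\alpha\ne 0$ the lowest power of $h$ occurring in $g(\alpha,h)$ is $h^{d-2}$; hence $\sum_i a_{ij}\alpha^i\equiv 0$ for every $j<d-2$, so that $a_{ij}=0$ whenever $j<d-2$. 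Consequently $h^{d-2}$ is a common factor, and we may write
\begin{equation*}
g(\alpha,h)=h^{d-2}\,\phi(\alpha,h),
\end{equation*}
with $\phi$ analytic near the origin. Comparing with $(\mathcal A_2)$ and \eqref{gH} gives $\phi(\alpha,0)=c(\alpha)$, $c(0)=0$, and $\phi(0,h)=c_0 h^2+O(h^3)$.

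Since $h^{d-2}\ne 0$ for $h\ne 0$, every nontrivial solution of the bifurcation equation solves $\phi(\alpha,h)=0$. I would then apply the implicit function theorem to $\phi$ at $(0,0)$, solving for $\alpha$ as a function of $h$. This requires $\partial\phi/\partial\alpha(0,0)=c'(0)\ne 0$; granting this, one obtains an analytic branch $\alpha^\ast=\alpha^\ast(h)$ defined on some $(-h_0,h_0)$ with $\alpha^\ast(0)=0$ and $\phi(\alpha^\ast(h),h)\equiv 0$, whence $g(\alpha^\ast(h),h)\equiv 0$, which proves (i). For (ii) I would insert the ansatz $\alpha^\ast(h)=a_1h+a_2h^2+O(h^3)$ into $\phi(\alpha^\ast(h),h)=0$ and match powers of $h$: because $\partial\phi/\partial h(0,0)=0$ (the coefficient of $h$ in $\phi$ vanishes at $\alpha=0$, as $g(0,h)$ starts at $h^d$), the order-$h$ balance gives $c'(0)a_1=0$, hence $a_1=0$; the order-$h^2$ balance then gives $c'(0)a_2+c_0=0$, i.e. $\alpha^\ast(h)=-\tfrac{c_0}{c'(0)}\,h^2+O(h^3)$, with $-c_0/c'(0)\ne 0$ since $c_0\ne 0$.

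The crux of the argument, and the step I expect to be the main obstacle, is establishing that $c(\alpha)$ vanishes to exactly first order at $\alpha=0$, i.e. $c'(0)\ne 0$. Assumption $(\mathcal A_2)$ by itself only guarantees $c(\alpha)\ne 0$ for $\alpha\ne 0$, which would permit a zero of higher order $m$ and, with it, a fractional leading exponent $h^{2/m}$ incompatible with conclusion (ii). To close this gap I would exploit the affine dependence of the Butcher matrix on the parameter, $\A(\alpha)=A+\alpha\,\P W_s\P^{-1}$ (see \eqref{BA}): the order of the method drops from $2s$ to exactly $2s-2$, and the leading energy-error coefficient $c(\alpha)$ of $h^{d-2}$ is, to first order, the linear-in-$\alpha$ defect of the lowest-order condition that the Gauss method satisfies but the perturbed method violates. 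Since this defect is proportional to $\alpha$ with a nonzero slope, $c'(0)\ne 0$ follows, completing the proof.
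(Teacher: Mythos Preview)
Your argument is essentially the same as the paper's: both reduce the bifurcation equation to a regular one and invoke the implicit function theorem after removing the trivial factor coming from $h=0$. The only cosmetic difference is that the paper, instead of factoring out $h^{d-2}$ and then showing \emph{a posteriori} that the linear coefficient $a_1$ vanishes, builds the scaling into the ansatz from the start by substituting $\alpha=\eta\,h^2$, dividing through by $h^{d}$, and applying the implicit function theorem directly to the resulting function $\tilde g(\eta,h)$; this yields $\eta(0)=-c_0/c'(0)$ (in your notation) in one stroke and hence $\alpha^\ast(h)=\eta(0)h^2+O(h^3)$.

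On the point you flag as the main obstacle, namely $c'(0)\ne 0$: the paper does \emph{not} supply the extra structural argument you sketch. It simply asserts that assumption $(\mathcal A_2)$ gives $\frac{\partial^{d-1}g}{\partial\alpha\,\partial h^{d-2}}(0,0)\ne 0$ (equivalently $c'(0)\ne 0$) and proceeds. Your observation that $(\mathcal A_2)$, read literally, only forces $c(\alpha)\ne 0$ for $\alpha\ne 0$ and would in principle allow a higher-order zero of $c$ at $0$, is well taken; the paper effectively treats the nonvanishing of that mixed partial as part of the standing hypothesis rather than as something to be derived.
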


\begin{proof}
 From ($\mathcal A_1$) and ($\mathcal A_2$) we obtain that the
 expansion of $g$ around $(0,0)$ is:
\begin{equation}
\label{expg} g(\alpha,h)= \sum_{j=d}^\infty
\frac{1}{j!}\frac{\partial^{j}g}{\partial h^j} (0,0) h^j  +
\sum_{i=1}^{\infty} \sum_{j=d-2}^\infty
\frac{1}{i!j!}\frac{\partial^{i+j}g}{\partial \alpha^i
\partial h^j} (0,0) h^j \alpha^i.
\end{equation}
We are now in the right position to apply the implicit function
theorem. We will look for a solution $\alpha^\ast =
\alpha^\ast(h)$ in the form $\alpha^\ast(h)=\eta(h) h^2$, where
$\eta(h)$ is a real-valued function of $h$. To this end, we
consider the change of variable $\alpha=\eta h^2$, and insert it
into \eqref{expg} thus obtaining
\begin{equation}
\label{expg1} \begin{array}{rl} \displaystyle g(\alpha,h) = &
\displaystyle \frac{1}{d!}\frac{\partial^{d}g}{\partial h^d} (0,0)
h^d  + \frac{1}{(d-2)!}\frac{\partial^{d-1}g}{\partial \alpha
\partial h^{d-2}} (0,0) h^d \eta \\[.3cm] &  \displaystyle
+ \frac{1}{(d-1)!}\frac{\partial^{d}g}{\partial \alpha
\partial h^{d-1}} (0,0) h^{d+1} \eta  + \mbox{higher order terms}.
\end{array}
\end{equation}
Therefore, for $h\not=0$, $g(\alpha,h)=0$ is equivalent to $\tilde
g(\eta, h)=0$, where
\begin{equation}
\label{expg2} \begin{array}{rl} \displaystyle \tilde g(\eta,h) = &
\displaystyle \frac{1}{(d-1)d}\, \frac{\partial^{d}g}{\partial h^d}
(0,0)   + \frac{\partial^{d-1}g}{\partial \alpha
\partial h^{d-2}} (0,0)  \eta \\[.3cm] &  \displaystyle
+ \frac{1}{d-1}\frac{\partial^{d}g}{\partial \alpha
\partial h^{d-1}} (0,0) h \eta  + \mbox{higher order terms}.
\end{array}
\end{equation}
By assumption ($\mathcal A_2$), both $\frac{\partial^{d}g}{\partial
h^d} (0,0)$ and $\frac{\partial^{d-1}g}{\partial \alpha
\partial h^{d-2}} (0,0)$ are different from zero and hence the
implicit function theorem assures the existence of a function
$\eta=\eta(h)$ such that $\tilde g(\eta(h), h)=0$. The solution of
$g(\alpha,h)=0$ for the variable $\alpha$ will then be given by
\begin{equation}
\label{alphah} \alpha^\ast(h) = \eta(h) h^2 = -\frac{1}{(d-1)d}\,
\frac{\frac{\partial^{d}g}{\partial h^d}
(0,0)}{\frac{\partial^{d-1}g}{\partial \alpha
\partial h^{d-2}} (0,0)}\, h^2 +O(h^3),
\end{equation}
and this completes the proof. \qquad
\end{proof}

By exploiting \cite[Theorem 6.1.2]{KP}, we see that the function
$\alpha^\ast(h)$ is analytic if the power series \eqref{expg} is
absolutely convergent for $|h|\le h_0$ and $|\alpha| \le \alpha_0$.
In any event, the function $\alpha^\ast(h)$ is tangent to the
$h$-axis at the origin which means that a very small correction of
the Gauss method is needed when the stepsize is small enough. As a
matter of fact, the needed correction is so small that the resulting
 method \eqref{epgauss} has indeed order
$2s$ instead of $2s-2$, just as the Gauss method obtained by posing
$\alpha=0$. This is a consequence of the following result.

\begin{theorem}
\label{fastorder} Consider the parametric method \eqref{qgauss} and
suppose that the parameter $\alpha$ is actually a function of the
stepsize $h$, in such a way that $\alpha(h)=O(h^2)$. Then, the
resulting method has order $2s$.
\end{theorem}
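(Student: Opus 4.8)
The plan is to control the local truncation error $\phi(\alpha,h) := y_1(\alpha) - y(t_0+h)$ jointly in the two small parameters $h$ and $\alpha$, and only afterwards specialize to $\alpha=\alpha(h)=O(h^2)$. First I would establish that $\phi$ is a smooth (indeed real-analytic, when $f$ is) function of $(\alpha,h)$ near the origin. This follows because the stage system $Y = e\otimes y_0 + h(\A(\alpha)\otimes I)F(Y)$ has, for $|h|$ small, a unique solution depending smoothly on $(\alpha,h)$ by the implicit function theorem (its Jacobian reduces to the identity at $h=0$), and because $\A(\alpha)=A+\alpha\P W_s\P^{-1}$ is affine in $\alpha$. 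Hence $y_1(\alpha)=y_0+h(b^T\otimes I)F(Y)$, and therefore $\phi$, admit a joint Taylor expansion about $(0,0)$.

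Next I would expand $\phi$ in powers of $h$ with $\alpha$-dependent coefficients, $\phi(\alpha,h)=\sum_{q\ge 1}\phi_q(\alpha)\,h^q$, where each $\phi_q$ is smooth in $\alpha$. Two facts already recorded in the paper pin these coefficients down. Since for every fixed $\alpha$ the method \eqref{qgauss} has order at least $2s-2$, i.e. $\phi(\alpha,h)=O(h^{2s-1})$, the coefficient of $h^q$ must vanish identically in $\alpha$ for $q\le 2s-2$; that is, $\phi_q(\alpha)\equiv 0$ for $q\le 2s-2$. Since at $\alpha=0$ the method is the Gauss formula of order $2s$, we also have $\phi_q(0)=0$ for $q\le 2s$. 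Combining the two, the lowest surviving coefficients are $\phi_{2s-1}$ and $\phi_{2s}$, and each vanishes at $\alpha=0$; by smoothness in $\alpha$ this forces $\phi_{2s-1}(\alpha)=O(\alpha)$ and $\phi_{2s}(\alpha)=O(\alpha)$.

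Collecting these estimates gives $\phi(\alpha,h)=\phi_{2s-1}(\alpha)h^{2s-1}+\phi_{2s}(\alpha)h^{2s}+O(h^{2s+1})=O(\alpha\,h^{2s-1})+O(h^{2s+1})$ as $(\alpha,h)\to(0,0)$. Substituting the hypothesis $\alpha=\alpha(h)=O(h^2)$ turns the term $O(\alpha\,h^{2s-1})$ into $O(h^{2s+1})$, so the local error satisfies $y_1(\alpha(h))-y(t_0+h)=O(h^{2s+1})$. A standard one-step convergence argument (uniform Lipschitz bounds, the method staying a small perturbation of the Gauss scheme since $\alpha(h)\to 0$) then upgrades this local estimate to global order $2s$.

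The main obstacle is conceptual rather than computational. The crude bound obtained by differentiating the stages, $\partial y_1/\partial\alpha=O(h^2)$, would only yield $\phi=O(h^4)$ after inserting $\alpha=O(h^2)$, which is far too weak for $s\ge 2$. The decisive point is therefore to recognize that the $h^{2s-1}$ (and $h^{2s}$) error coefficients are not merely bounded but vanish at $\alpha=0$, so that they contribute $O(\alpha\,h^{2s-1})$ rather than $O(h^{2s-1})$; this is precisely where the joint smoothness in $(\alpha,h)$ and the two order properties of the family must be invoked together. Verifying this joint smoothness rigorously, rather than merely formally, is the only technical care the argument requires.
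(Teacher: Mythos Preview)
Your proof is correct and follows essentially the same approach as the paper: both combine the order-$2s$ property at $\alpha=0$ with the order-$(2s-2)$ property for general $\alpha$, together with smoothness in $\alpha$, to obtain the decomposition $\phi(\alpha,h)=O(h^{2s+1})+\alpha\,O(h^{2s-1})$, and then substitute $\alpha=O(h^2)$. The paper's version is simply more terse, expanding in $\alpha$ via the Lagrange remainder $y_1(\alpha,h)=y_1(0,h)+y_1'(\zeta_\alpha,h)\,\alpha$ rather than expanding in $h$ first as you do.
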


\begin{proof}
Let $y_1(\alpha,h)$ be the solution computed by method
\eqref{qgauss} at time $t_0+h$, starting at $y_0=y(t_0)$, and
consider its expansion with respect to the variable $\alpha$, in a
neighborhood of zero:
$$
y_1(\alpha,h) = y_1(0,h)+ y'(\zeta_\alpha,h) \alpha.
$$
We recall that  $y_1(0,h)$ is the numerical solution provided after
a single step of the Gauss method and hence it is $O(h^{2s+1})$
accurate while, for $\alpha \not = 0$, $y_1(\alpha,h)$ yields an
approximation to the true solution of order $2s-1$. This implies
that $y'(\zeta_\alpha,h)$ is $O(h^{2s-1})$. Consequently,
$$
y_1(\alpha,h) - y(t_0+h) = y_1(0,h)- y(t_0+h)+ y'(\zeta_\alpha,h)
\alpha = O(h^{2s+1}) + \alpha O(h^{2s-1}),
$$
from which we deduce that the error at the left hand side is
$O(h^{2s+1})$ if and only if $\alpha=O(h^2)$.\qquad
\end{proof}

Figure \ref{keplerg} reports the level curves of the function
$g(\alpha,h)$ in a neighborhood of the origin, for the Kepler
problem described in Subsection \ref{keplerproblem} (the vector
$y_0$ has been chosen as in \eqref{keplerinitial}). The tick lines
in the plot correspond to the points $(\alpha, h)$ in the plane
where $g$ vanish. This zero level set consists  of the vertical axis
$h=0$ and of the function $\alpha^\ast(h)$, which splits the region
surrounding the origin into two adjacent subregions where the
function $g$ has clearly opposite sign. Despite the local character
of the above existence result, we see that the branches of the
function $\alpha^\ast(h)$ extend away from the origin. Similar
bifurcation diagrams may be traced starting at different values of
$y_0$ for all the test problems we have considered: this suggests
that, in the spirit of the long-time simulation of dynamical
systems, a quite large stepsize may  be used during the numerical
integration performed by method \eqref{epgauss}.
\begin{figure}[htb]
\begin{center}
\includegraphics[width=14cm,height=8cm]{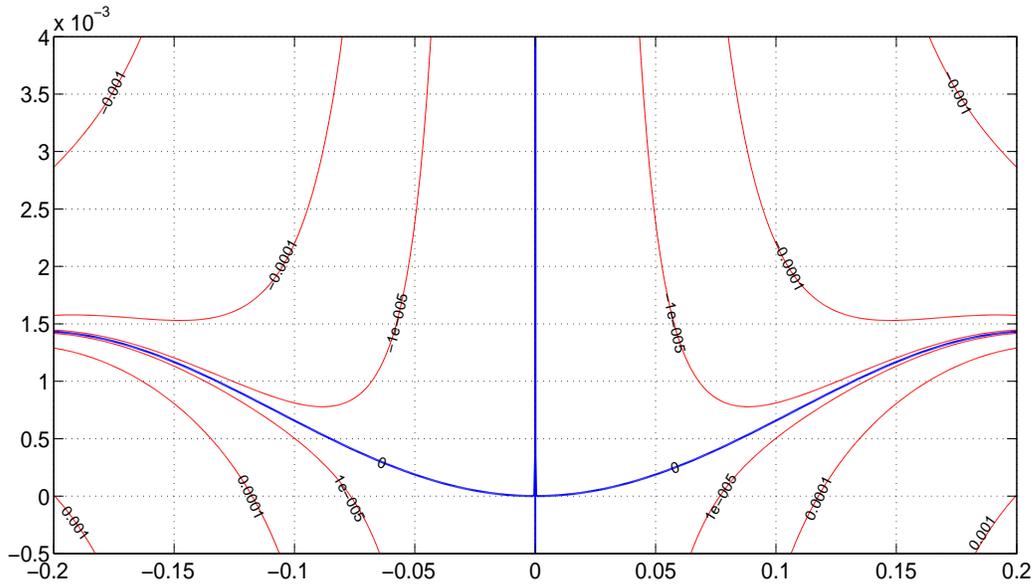}
\end{center}
\caption{Level curves in the plane $(h,\alpha)$ of the function
$g(\alpha,h,y_0)$ associated with the method \eqref{qgauss} of order
four, for the Kepler problem (see Subsection \ref{keplerproblem}),
in a neighborhood of the origin: $h \in [-0.2, 0,2]$, $\alpha \in
[-0.5\cdot 10^{-3}, 4\cdot 10^{-3}]$. Besides the $\alpha$-axis, a
zero level curve tangent to the $h$-axis at the origin is visible.
Such curve separates two regions around the origin where the
function $g$ has opposite sign. We notice that just a small
correction of the Gauss method suffices to recover the energy
preservation even for relatively large stepsizes.} \label{keplerg}
\end{figure}

We end this section by providing a straightforward generalization of
Theorem \ref{implicit} to the case where the parameter $\alpha$ is
used to perturb a generic (not necessarily the last) element on the
subdiagonal of the matrix $X_s$, and its symmetric.

\begin{theorem} \label{implicit1}
Consider the method \eqref{qgauss1} with $\widetilde W_s$ as in
\eqref{tildeWs} with $\alpha_1 \equiv \alpha$ and
$\alpha_2=\dots=\alpha_r=0$. We assume that assumption ($\mathcal
A_1$) and the following assumption (replacing ($\mathcal A_2$))
hold true:
\begin{itemize}
\item[($\mathcal A_2^r$)] let $d$ be the order of the error in the Hamiltonian function
associated with the Gauss method applied to the given Hamiltonian
system \eqref{hamilode} and the given state vector $y_0$. That is,
(\ref{gH}) holds true. Then, we assume that for any fixed $\alpha
\not = 0$, $$g(\alpha,h) = c_r(\alpha) h^{d-2r} + O(h^{d-2r+1}),$$
with $c_r(\alpha) \not =0$.
\end{itemize}
Then, there exists a function $\alpha^\ast=\alpha^\ast(h)$ defined
in a neighborhood of the origin $(-h_0,h_0)$  and such that:
\begin{itemize}
\item[(i)] $g(\alpha^\ast(h),h)=0$, for all $h\in(-h_0,h_0)$,
\item[(ii)]$\alpha^\ast(h)=\mathrm{const}\cdot h^{2r} + O(h^{2r+1})$.
\end{itemize}
The symplectic energy conserving method resulting from this choice
of the parameter has order $2s$.
\end{theorem}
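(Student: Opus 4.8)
The plan is to follow the proof of Theorem~\ref{implicit} almost verbatim, with the exponent $2$ systematically replaced by $2r$ and with $(\mathcal A_2^r)$ taking over the role played there by $(\mathcal A_2)$. First I would write down the two--variable expansion of $g$ about the origin. Analyticity $(\mathcal A_1)$ gives a convergent double power series; by \eqref{gH} its pure--$h$ part begins at $h^{d}$, while $(\mathcal A_2^r)$ forces every monomial $\alpha^{i}h^{j}$ with $i\ge1$ to have $j\ge d-2r$ (a monomial with $i\ge1$ and $j<d-2r$ would, after fixing $\alpha\neq0$ and collecting powers of $h$, produce a term below $h^{d-2r}$ and contradict $g(\alpha,h)=c_r(\alpha)h^{d-2r}+O(h^{d-2r+1})$). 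This yields the exact analogue of the expansion \eqref{expg}, the only change being that the inner sum now starts at $j=d-2r$. As in Theorem~\ref{implicit}, I read $(\mathcal A_2^r)$ as guaranteeing the non--degeneracy $\frac{\partial^{\,d-2r+1}g}{\partial\alpha\,\partial h^{\,d-2r}}(0,0)\neq0$; this is reasonable because $\A(\alpha)$ depends linearly on the single parameter $\alpha=\alpha_1$, so the leading contribution of $\alpha$ to the Hamiltonian error is linear, i.e. $c_r(\alpha)=c_r'(0)\alpha+\cdots$ with $c_r'(0)\neq0$. Turning $(\mathcal A_2^r)$ into the clean statement that the lowest mixed power is exactly $\alpha^{1}h^{d-2r}$ with nonzero coefficient is the one step that genuinely needs care.

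With the expansion in hand I would introduce the ansatz $\alpha^\ast(h)=\eta(h)\,h^{2r}$ and substitute $\alpha=\eta h^{2r}$. Under this substitution a monomial $\alpha^{i}h^{j}$ becomes $\eta^{i}h^{\,j+2ri}$, so the only terms of order $h^{d}$ are the pure term $\tfrac{1}{d!}\frac{\partial^{d}g}{\partial h^{d}}(0,0)h^{d}$ and the single mixed term with $i=1,\ j=d-2r$, namely $\tfrac{1}{(d-2r)!}\frac{\partial^{\,d-2r+1}g}{\partial\alpha\,\partial h^{\,d-2r}}(0,0)\,h^{d}\eta$; everything else is $O(h^{d+1})$. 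Dividing by $h^{d}$ (legitimate for $h\neq0$) and normalizing by $(d-2r)!$ produces a function $\tilde g(\eta,h)$ that extends analytically to $h=0$ with
\[
\tilde g(\eta,0)=\frac{(d-2r)!}{d!}\,\frac{\partial^{d}g}{\partial h^{d}}(0,0)+\frac{\partial^{\,d-2r+1}g}{\partial\alpha\,\partial h^{\,d-2r}}(0,0)\,\eta .
\]
Because $\partial\tilde g/\partial\eta$ equals the nonzero quantity $\frac{\partial^{\,d-2r+1}g}{\partial\alpha\,\partial h^{\,d-2r}}(0,0)$ at the root $\eta_0$ of $\tilde g(\cdot,0)$, the implicit function theorem supplies an analytic $\eta=\eta(h)$ with $\tilde g(\eta(h),h)=0$ near $h=0$. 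Setting $\alpha^\ast(h)=\eta(h)h^{2r}$ gives $g(\alpha^\ast(h),h)=0$, which is (i), and $\alpha^\ast(h)=\eta(0)h^{2r}+O(h^{2r+1})=\mathrm{const}\cdot h^{2r}+O(h^{2r+1})$, which is (ii).

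For the concluding order statement I would re-run the proof of Theorem~\ref{fastorder} with $2r$ in place of $2$. The point is that for fixed $\alpha\neq0$ method \eqref{qgauss1} (here $\alpha_1=\alpha$ and $\alpha_2=\dots=\alpha_r=0$, so $\alpha$ perturbs the subdiagonal entry $\xi_{s-r}$) has order $2(s-r)=2s-2r$, hence local solution error $O(h^{2s-2r+1})$, whereas $\alpha=0$ recovers the Gauss method with error $O(h^{2s+1})$. Writing $y_1(\alpha,h)=y_1(0,h)+y'(\zeta_\alpha,h)\,\alpha$, the difference $y_1(\alpha,h)-y_1(0,h)$ is $O(h^{2s-2r+1})$, so $y'(\zeta_\alpha,h)=O(h^{2s-2r+1})$, and therefore
\[
y_1(\alpha,h)-y(t_0+h)=O(h^{2s+1})+\alpha\,O(h^{2s-2r+1}),
\]
which is $O(h^{2s+1})$ precisely when $\alpha=O(h^{2r})$. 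By (ii) the selected $\alpha^\ast(h)=O(h^{2r})$ meets this condition, so the resulting symplectic, energy--conserving method has order $2s$. The main obstacle throughout is the bookkeeping just flagged; once the leading mixed term is pinned down, both the implicit function theorem and the order count go through exactly as in Theorems~\ref{implicit} and \ref{fastorder}.
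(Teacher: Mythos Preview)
Your proposal is correct and follows precisely the approach the paper intends: the paper does not write out a separate proof of Theorem~\ref{implicit1} but presents it as a ``straightforward generalization of Theorem~\ref{implicit}'', and your argument is exactly that generalization---the expansion \eqref{expg} with the inner sum starting at $j=d-2r$, the ansatz $\alpha=\eta h^{2r}$, the implicit function theorem applied to the reduced equation, and the order-$2s$ conclusion obtained by rerunning Theorem~\ref{fastorder} with $O(h^{2s-2r+1})$ in place of $O(h^{2s-1})$. Your remark that extracting the non-degeneracy $\partial^{\,d-2r+1}g/\partial\alpha\,\partial h^{\,d-2r}(0,0)\neq0$ from $(\mathcal A_2^r)$ is the one delicate point is fair; the paper makes the same tacit identification in the proof of Theorem~\ref{implicit}.
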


In the next section, we shall provide numerical evidence for the
above presented results.

\section{Numerical tests}
\label{numerical_tests} In this section we present a few numerical
tests showing the effectiveness of our approach. Method
\eqref{epgauss} and its generalization are implemented by solving,
at each step, system \eqref{concon}. The efficient solution of such
system will be the object of future studies; at present we adopt
either one of the following techniques:
\begin{enumerate}
\item at each step, an interval $[\alpha_1, \alpha_2]$ is detected
such that $g(\alpha_1)g(\alpha_2)<0$; after that, a dichotomic
search is implemented to locate $\alpha^\ast$ within an error close
to the machine precision;
\item the first (vector) equation in \eqref{concon} is solved with
$\alpha_0=0$ (Gauss method) and $\alpha_1=c h^r$, where $c$ and $r$
are suitable constants empirically estimated;\footnote{For example
see the last column in Table \ref{kepler_tab}.}
 after that, a sequence
$\alpha_k$ is produced by solving the second (scalar) equation in
\eqref{concon} via the secant method.

\end{enumerate}
In both cases,  an outer iteration generating the sequence
$\alpha_k$ converging to $\alpha^\ast$ is coupled with an
 inner iteration that determines the solution $y_1(\alpha_k)$ starting from
 $y_0$. Such scheme is repeated at each step of integration.

The methods that we will consider in our experiments are: method
\eqref{epgauss} with $s=2$ (fourth order);  method \eqref{epgauss}
with $s=3$ (sixth order); the sixth-order method described in
Theorem \ref{implicit1} with $s=3$, that is we insert a single
perturbation parameter $\alpha$ in the first (rather than in the
second) subdiagonal element of the matrix $X_3$. In order to
distinguish between these two  methods of order six, hereafter the
latter will be referred to as ``the order six method of the second
type''.

\subsection{The Kepler problem} \label{keplerproblem}
In this problem, two bodies subject to Newton's law of gravitation
revolve about their center of mass, placed at the origin, in
elliptic orbits in the $(q_1,q_2)$-plane. Assuming unitary masses
and gravitational constant, the dynamics is described by the
Hamiltonian function
\begin{equation}
\label{Hkepler} H(q_1,q_2,p_1,p_2)= \frac{1}{2}\left( p_1^2+p_2^2
\right) - \frac{1}{\sqrt{q_1^2+q_2^2}}.
\end{equation}
Besides the total energy $H$, a relevant  first integral for the
system is represented by the angular momentum
\begin{equation}
\label{Lkepler} L(q_1,q_2,p_1,p_2) = q_1 p_2 - q_2 p_1.
\end{equation}
Due to its symplecticity, the quadratic first integral
\eqref{Lkepler} will be automatically conserved by method
\eqref{qgauss}, for any choice of the parameter $\alpha$. On the
other hand, we show that, at each step of integration, the parameter
$\alpha$ may be tuned in order to get energy conservation in the
numerical solution.

As initial condition we choose
\begin{equation}
\label{keplerinitial} q_1(0)=1-e, \quad q_2(0)=0, \quad p_1(0)=0,
\quad p_2(0)=\sqrt{\frac{1+e}{1-e}},
\end{equation}
which confers  an eccentricity  equal to $e$ on the orbit.
Consequently, $H(q,p)=-0.5$ and $L(q,p)=\sqrt{1-e^2}$. We set
$e=0.6$ since, in this experiment, we are going to use constant
stepsize (see \cite[Sec. I.2.3]{HLW}). More precisely, we solve
problem \eqref{Hkepler} in the interval $[t_0, T]=[0, 50]$ by the
two-stages
 method \eqref{epgauss} with  the following set
of stepsizes: $h_i=2^{-i}$, $i=1,\dots,7$. Figure \ref{kepler_Ham}
reports the errors in the  Hamiltonian function $H$ and in the
angular momentum $L$ of the numerical solutions generated by the
method implemented with the intermediate stepsize $h=2^{-5}$. These
plots, which remain almost the same whatever is the stepsize
considered in the given range, testify that the integration
procedure performed by method \eqref{epgauss} is indeed feasible and
both  energy and angular momentum preservation may be recovered in
the discrete approximation of \eqref{hamilode}. For comparison
purposes, we also report the same quantities for the Gauss methods
of order $4$ (corresponding to the choice $\alpha=0$ in
\eqref{qgauss}).
\begin{figure}[htb]
\begin{center}
\includegraphics[width=12cm,height=7cm]{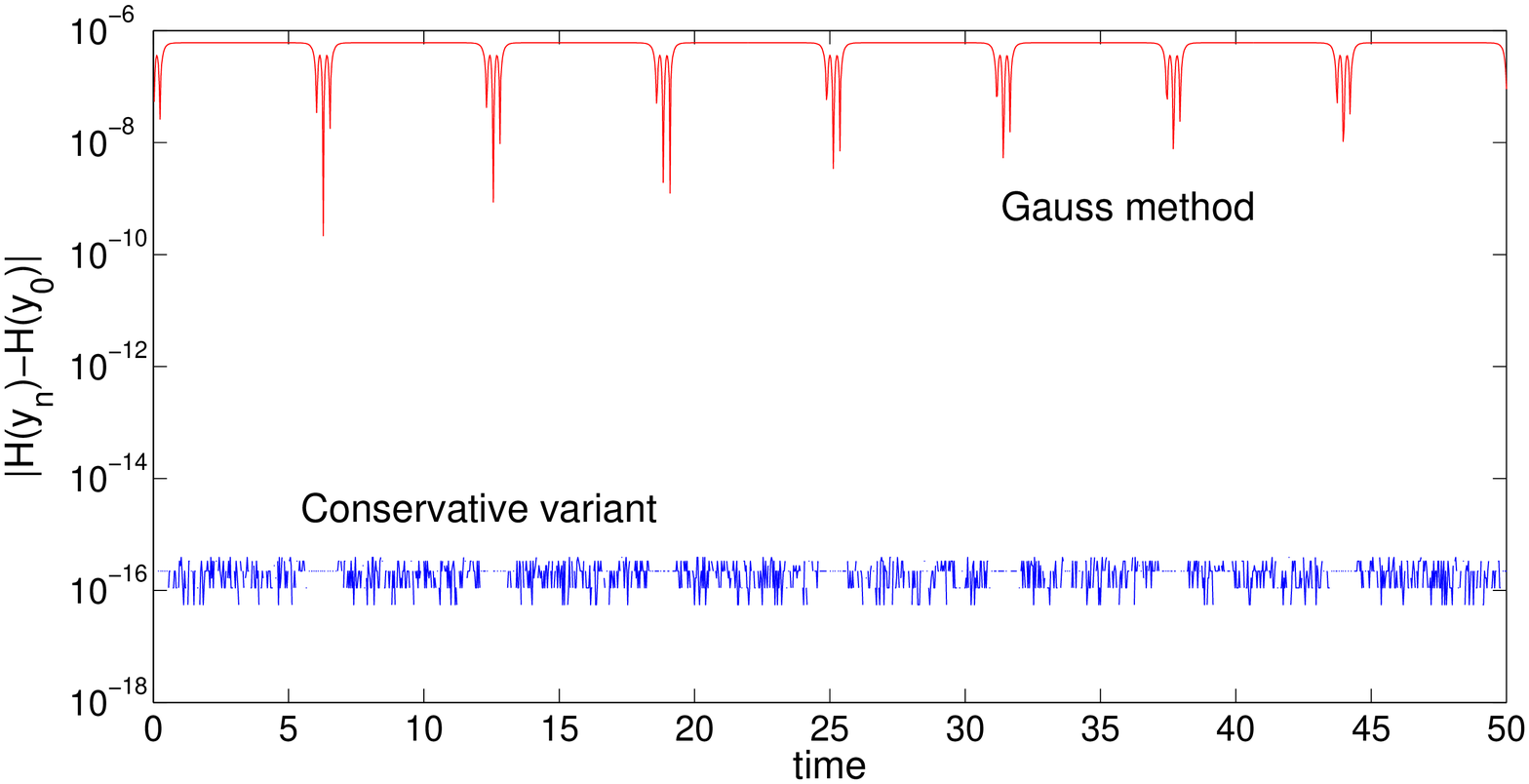}\\
\includegraphics[width=12cm,height=7cm]{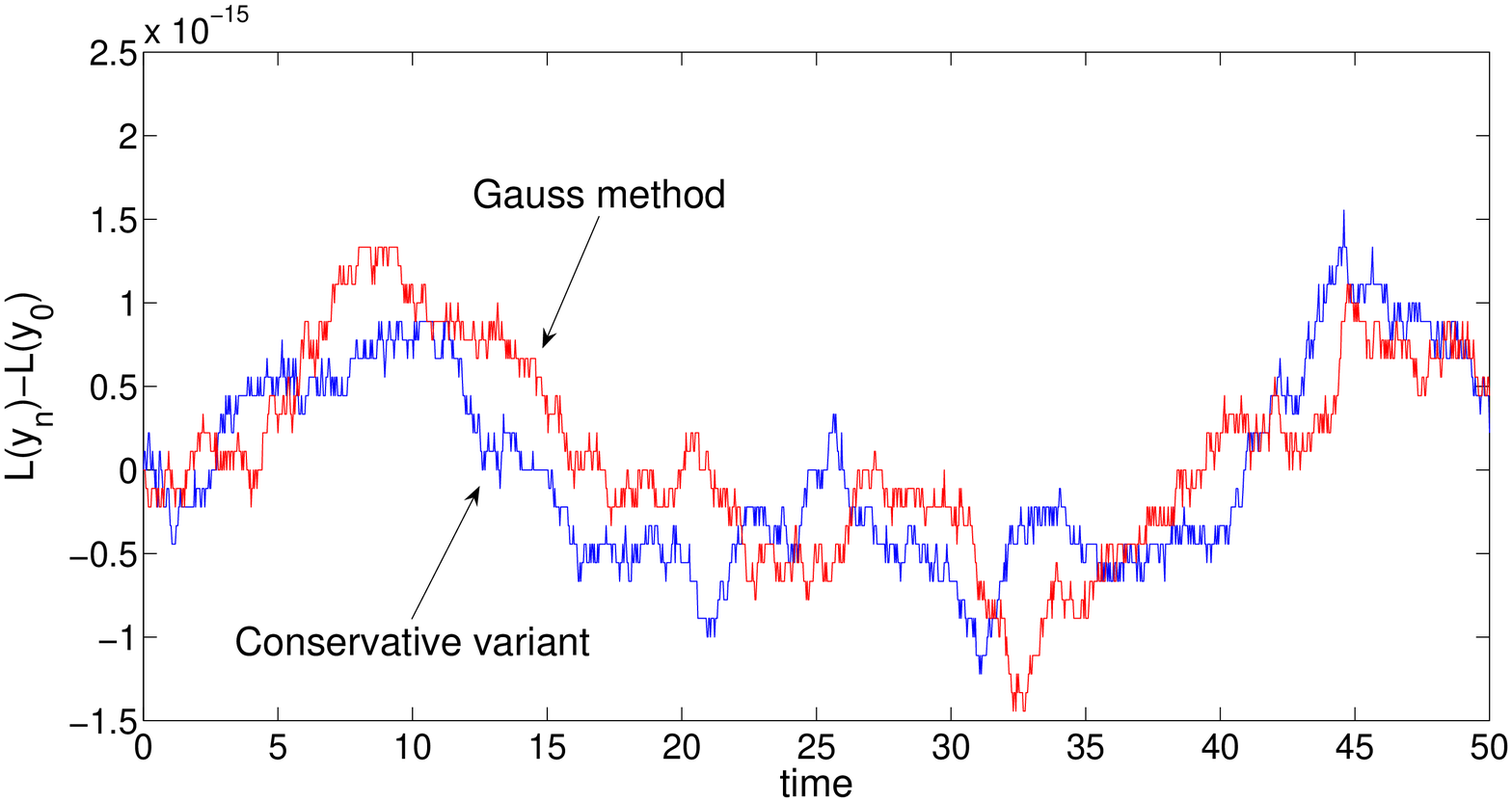}
\end{center}
\caption{Upper picture: errors in the Hamiltonian function of the
Kepler problem evaluated along the numerical solution generated by
the Gauss method of order four and its conservative variant (method
\eqref{epgauss} with $s=2$). Bottom plot: error in the numerical
angular momentum of the solution computed by the two methods. In
both cases the stepsize used is $h=2^{-5}$.} \label{kepler_Ham}
\end{figure}

The second and third columns of Table \ref{kepler_tab} report the
global error $e(h_i)=|y_N(h_i)-y(T)|$, $N=T/h_i$, at the end point
of the integration interval and the corresponding numerical order.
According to Theorem \eqref{fastorder}, we see that the maximum
order is preserved by method \eqref{epgauss}.

In Figure \ref{kepler_alpha_h5} the sequence $\alpha^\ast_n$,
corresponding to the values of the parameter $\alpha$ that at each
step restore the conservation of the energy, are plotted for the
case $h=2^{-5}$.  We consider
$\delta(h)=\max_n(\alpha^\ast_n)-\min_n(\alpha^\ast_n)$ as a measure
of the total variability of the values of the sequence $\{
\alpha^\ast_n\}$. Such quantity is reported in the fourth column of
Table~\ref{kepler_tab} for the values of the stepsize $h_i$ used in
this test. According to the result of Theorem \ref{implicit}, the
last column in the table confirms that the dependence of $\delta(h)$
on the stepsize $h$ is of the form $\delta = c h^2 +
\mathrm{h.o.t.}$, with $c \simeq 0.16$.

\begin{figure}[htb]
\begin{center}
\includegraphics[width=12cm,height=7cm]{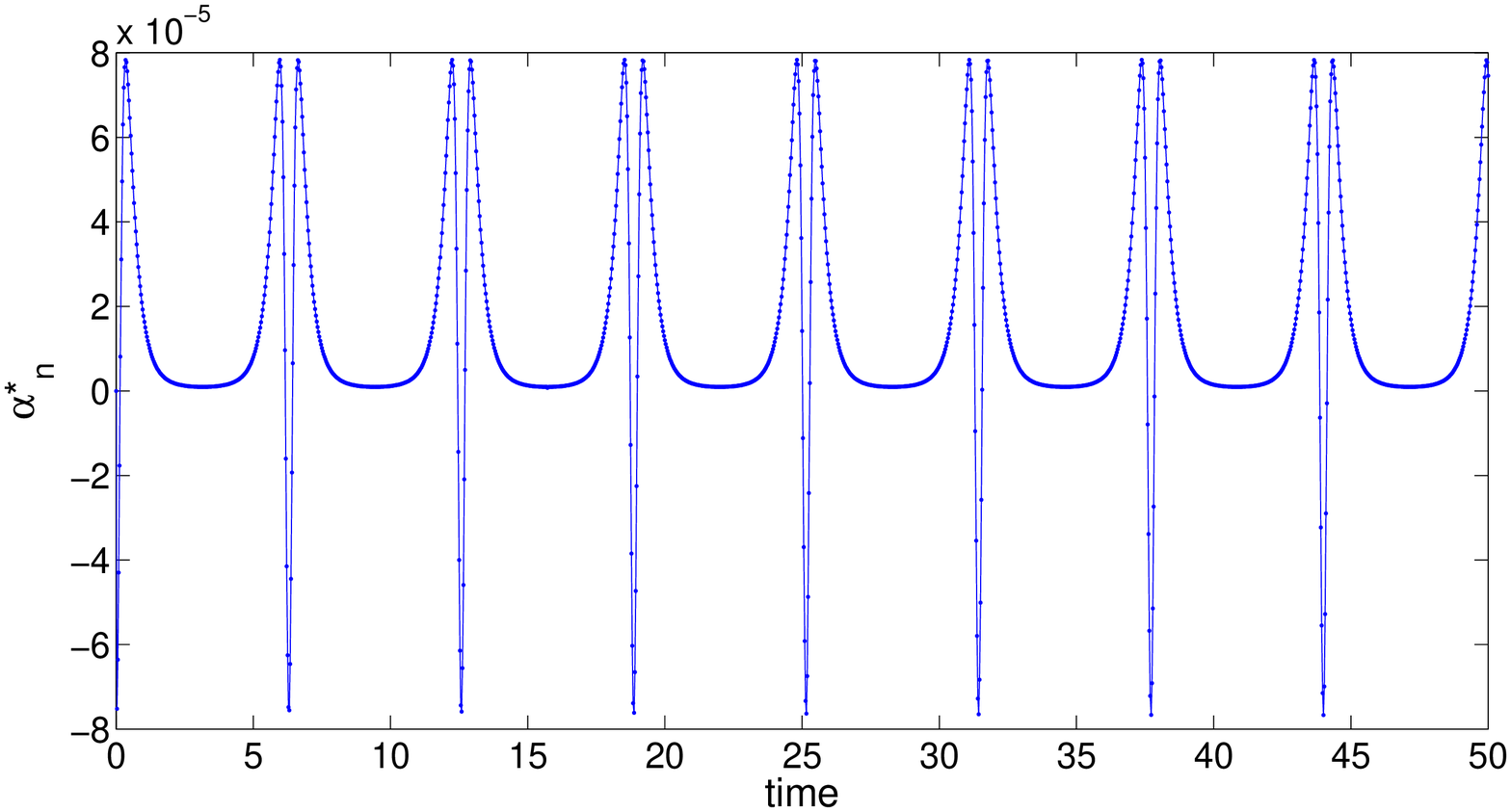}
\end{center}
\caption{Sequence of the values of the parameter $\alpha^\ast$ in
the  method \eqref{epgauss} with $s=2$ and $h=2^{-5}$.}
\label{kepler_alpha_h5}
\end{figure}

\begin{table}[hbt]
$$
\begin{array}{|c|cccc|}
\hline
h  &  e(h) &  \mbox{order} &  \delta(h) &  \delta(h)/h^2 \\
\hline
2^{-1}   &    2.62\cdot 10^{0}    \quad & \quad        \quad & \quad 2.13\cdot 10^{-2} \quad & \quad  8.5374\cdot 10^{-2}  \\[.2cm]
2^{-2}   &    3.85\cdot 10^{-1}   \quad & \quad  2.763 \quad & \quad 1.04\cdot 10^{-2} \quad & \quad  1.6700\cdot 10^{-1}  \\[.2cm]
2^{-3}   &    2.50\cdot 10^{-2}   \quad & \quad  3.945 \quad & \quad 2.52\cdot 10^{-3} \quad & \quad  1.6185\cdot 10^{-1}  \\[.2cm]
2^{-4}   &    1.59\cdot 10^{-3}   \quad & \quad  3.970 \quad & \quad 6.23\cdot 10^{-4} \quad & \quad  1.5951\cdot 10^{-1}  \\[.2cm]
2^{-5}   &    1.00\cdot 10^{-4}   \quad & \quad  3.991 \quad & \quad 1.55\cdot 10^{-4} \quad & \quad  1.5878\cdot 10^{-1}  \\[.2cm]
2^{-6}   &    6.28\cdot 10^{-6}   \quad & \quad  3.997 \quad & \quad 3.87\cdot 10^{-5} \quad & \quad  1.5862\cdot 10^{-1}  \\[.2cm]
2^{-7}   &    3.93\cdot 10^{-7}   \quad & \quad  3.999 \quad & \quad 9.67\cdot 10^{-6} \quad & \quad  1.5856\cdot 10^{-1}  \\[.2cm]
\hline
\end{array}
$$
\caption{Performance of the order four method \eqref{epgauss}
applied to the Kepler problem. The  global error  at $T=50$ (second
column), and the corresponding order obtained via the formula
$\log_2(e(h_i)/e(h_{i+1}))$, indicate that the perturbations
introduced in the Gauss collocation conditions (see \eqref{coll})
are small enough that the order $4$ of the Gauss method with two
stages is conserved by its energy preserving variant. The last two
columns give a measure of the perturbations and of the rate they
tend to zero as $h\rightarrow 0$. The quantity $\delta(h)$ is the
amplitude of the minimum interval that encloses all the values
$\alpha^\ast_n$ for the given stepsize $h$ and in the given
integration interval. Hence the last column confirms what proved in
Theorem \ref{implicit}, namely that the perturbations are $O(h^2)$.}
\label{kepler_tab}
\end{table}

\subsection{Test problem 2}
We consider the problem defined by the following polynomial
Hamiltonian function:
\begin{equation}
\label{macie1H} H(q_1,q_2,p_1,p_2)=\frac{1}{2}(p_1^2+p_2^2) +
(q_1^2+q_2^2)^2.
\end{equation}
This problem has been proposed in \cite{MaPr} as an example of a
class of polynomial systems which, under suitable assumptions, admit
an additional polynomial first integral $F$ which is functionally
independent from $H$. In this case, the additional (irreducible)
first integral is
\begin{equation}
\label{macie1L} L(q_1,q_2,p_1,p_2)=q_1p_2-q_2p_1.
\end{equation}
The polynomial $L$ being quadratic, we expect that our methods may
preserve both $H$ and $L$.\footnote{Of course $L$ may again be
interpreted as the angular momentum of a mechanical system having
\eqref{macie1H} as Hamiltonian function.}

We have solved problem \eqref{macie1H} by means of two methods of
order six ($s=3$): method \eqref{epgauss}, and the order six method
of the second type, described in Theorem \ref{implicit1}.

Figure \ref{fig_macie1} reports the errors in the Hamiltonian
function $H$ and in the quadratic first integral $L$ of the
numerical solutions generated by the latter method implemented with
the intermediate stepsize $h=2^{-3}$. For comparison purposes, we
also report the same quantities for the Gauss methods of order six.

\begin{figure}[htb]
\begin{center}
\includegraphics[width=12cm,height=7cm]{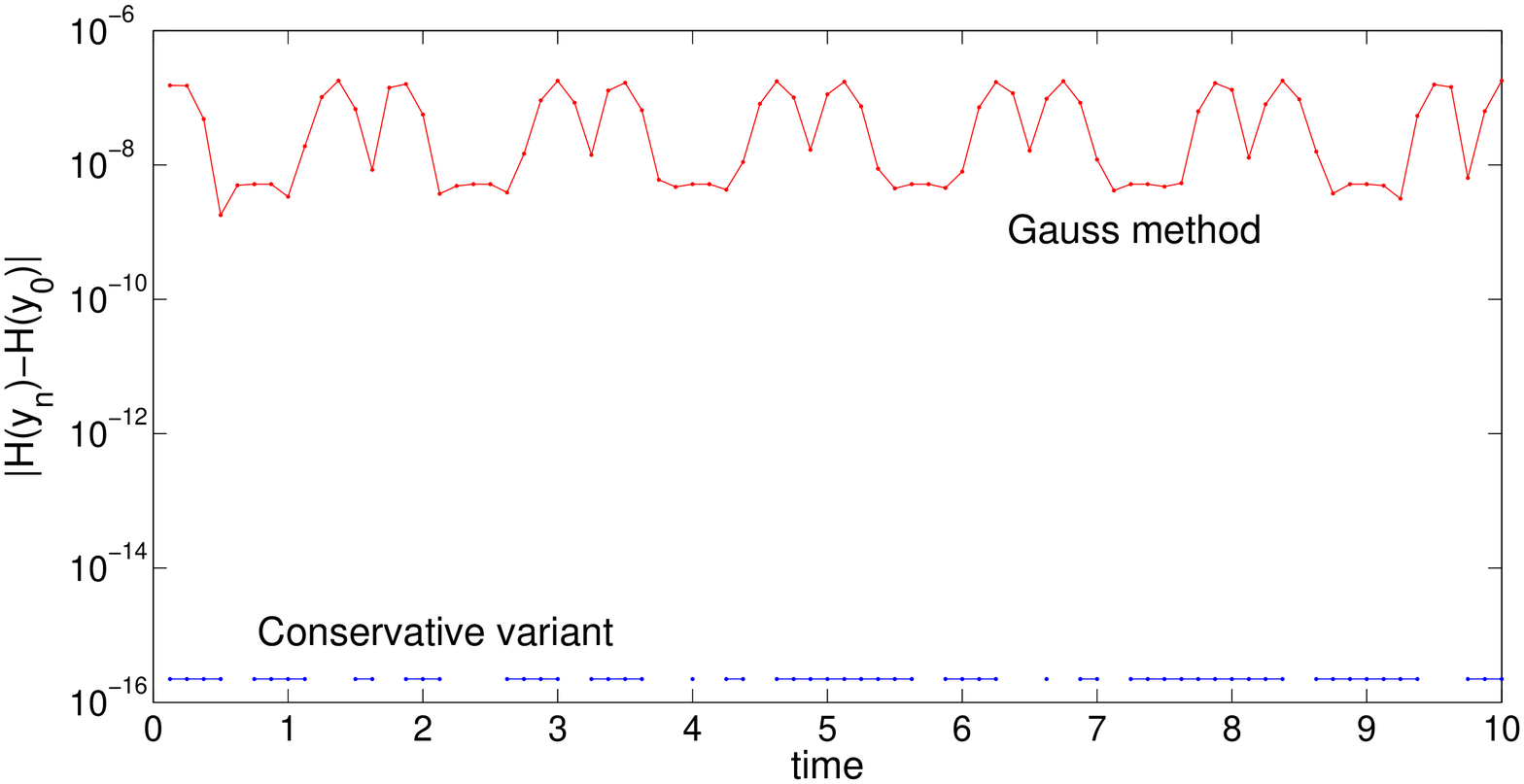}\\
\includegraphics[width=12cm,height=7cm]{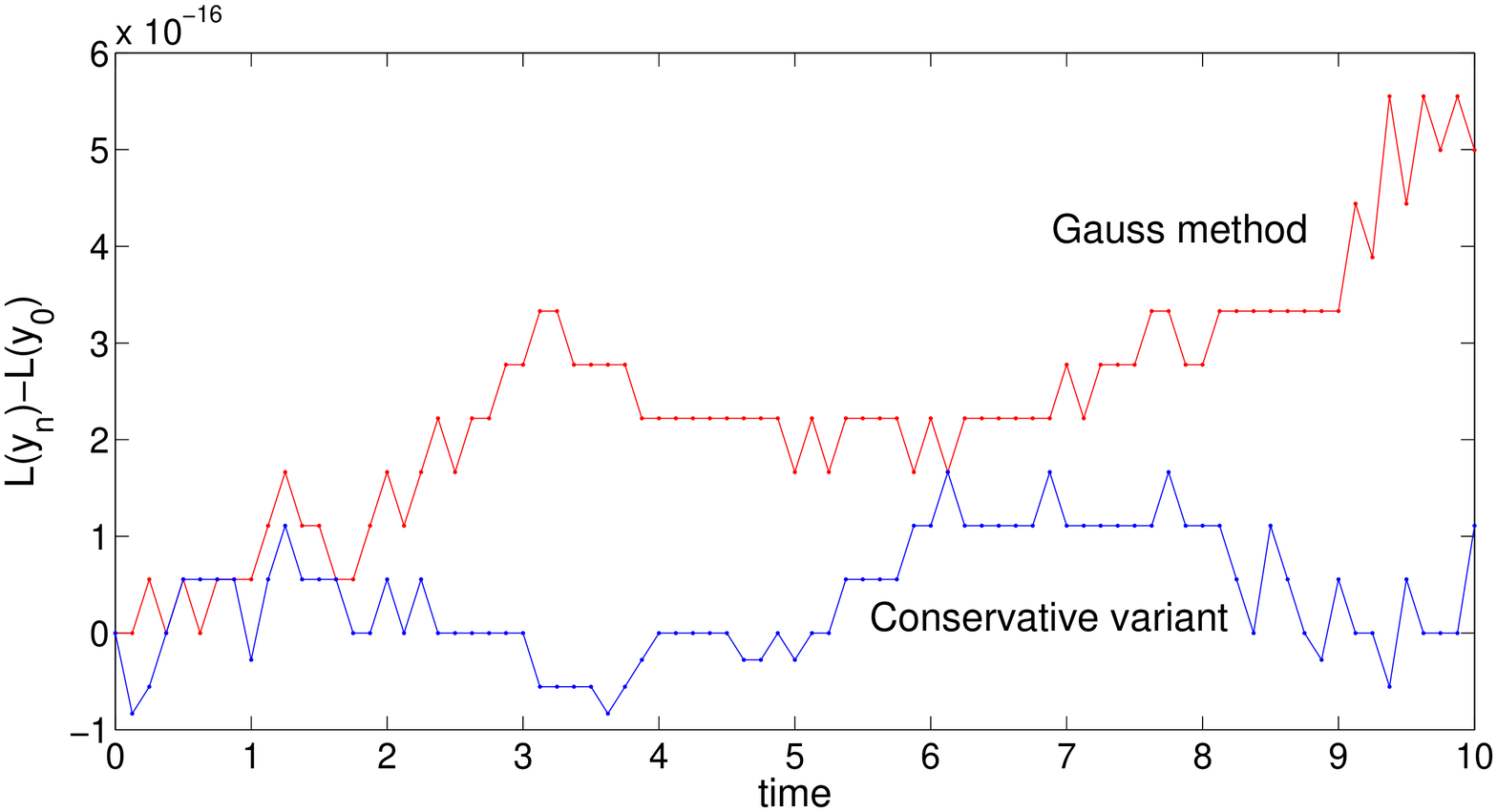}
\end{center}
\caption{Upper picture: errors in the Hamiltonian function of test
problem 2 evaluated along the numerical solution generated by the
Gauss method of order six and its conservative variant of the second
type. Bottom plot: error in the quadratic first integral
\eqref{macie1L} of the solution computed by the two methods. In both
cases the stepsize used is $h=2^{-3}$.} \label{fig_macie1}
\end{figure}

Tables \ref{macie1_tab1} and \ref{macie1_tab2} are the analogues of
Table \ref{kepler_tab} for these two methods: we see that both
methods achieve order six but, while in the former
$\alpha^\ast(h)=O(h^2)$, in the latter $\alpha^\ast(h)=O(h^4)$
consistently with Theorems \ref{implicit}, \ref{fastorder}, and
\ref{implicit1}.
\begin{table}[hbt]
$$
\begin{array}{|c|cccc|}
\hline
h  &  e(h) &  \mbox{order} &  \delta(h) &  \delta(h)/h^2 \\
\hline
2^{-1}   &        2.17\cdot 10^{-2}    \quad & \quad        \quad & \quad     1.59\cdot 10^{-2} \quad & \quad      6.37\cdot 10^{-2}  \\[.2cm]
2^{-2}   &        4.59\cdot 10^{-4}    \quad & \quad      5.562 \quad & \quad     3.99\cdot 10^{-3} \quad & \quad      6.39\cdot 10^{-2}  \\[.2cm]
2^{-3}   &        7.77\cdot 10^{-6}    \quad & \quad      5.884 \quad & \quad     9.99\cdot 10^{-4} \quad & \quad      6.40\cdot 10^{-2}  \\[.2cm]
2^{-4}   &        1.24\cdot 10^{-7}    \quad & \quad      5.970 \quad & \quad     2.53\cdot 10^{-4} \quad & \quad      6.48\cdot 10^{-2}  \\[.2cm]
2^{-5}   &        1.94\cdot 10^{-9}    \quad & \quad      5.992 \quad & \quad     6.33\cdot 10^{-5} \quad & \quad      6.49\cdot 10^{-2}  \\[.2cm]
2^{-6}   &        3.05\cdot 10^{-11}   \quad & \quad      5.994 \quad & \quad     1.59\cdot 10^{-5} \quad & \quad      6.51\cdot 10^{-2}  \\[.2cm]
\hline
\end{array}
$$
\caption{Performance of method \eqref{epgauss} of order six applied
to  problem \eqref{macie1H}. The reported quantities are the
analogues of the ones presented in Table \ref{kepler_tab}.}
\label{macie1_tab1}
\end{table}
\begin{table}[hbt]
$$
\begin{array}{|c|cccc|}
\hline
h  &  e(h) &  \mbox{order} &  \delta(h) &  \delta(h)/h^4 \\
\hline
2^{-1}   &            4.91\cdot 10^{-2}    \quad & \quad        \quad & \quad                     5.59\cdot 10^{-2} \quad & \quad          0.895 \\[.2cm]
2^{-2}   &            1.46\cdot 10^{-2}    \quad & \quad          1.753 \quad & \quad             1.51\cdot 10^{-2} \quad & \quad          3.87  \\[.2cm]
2^{-3}   &            1.84\cdot 10^{-4}    \quad & \quad          6.304 \quad & \quad             4.92\cdot 10^{-4} \quad & \quad          2.01  \\[.2cm]
2^{-4}   &            3.23\cdot 10^{-6}    \quad & \quad          5.836 \quad & \quad             4.07\cdot 10^{-5} \quad & \quad          2.66  \\[.2cm]
2^{-5}   &            4.73\cdot 10^{-8}    \quad & \quad          6.091 \quad & \quad             2.30\cdot 10^{-6} \quad & \quad          2.41  \\[.2cm]
2^{-6}   &            7.03\cdot 10^{-10}   \quad & \quad          6.074 \quad & \quad             1.50\cdot 10^{-7} \quad & \quad          2.51  \\[.2cm]
\hline
\end{array}
$$
\caption{Performance of the sixth-order method of the second kind
applied to problem \eqref{macie1H}.} \label{macie1_tab2}
\end{table}

\subsection{The H\'enon-Heiles problem}
The H\'{e}non-Heiles equation originates from a problem in Celestial
Mechanics describing the motion of a star under the action of a
gravitational potential of a galaxy which is assumed
time-independent and with an axis of symmetry (the $z$-axis) (see
\cite{HH} and references therein). The main question related to this
model was to state the existence of a third first integral, beside
the total energy and the angular momentum. By exploiting the
symmetry of the system and the conservation of the angular momentum,
H\'enon and Heiles reduced from three (cylindrical coordinates) to
two (planar coordinates) the degrees of freedom, thus showing that
the problem was equivalent to the study of the motion of a particle
in a plane subject to an arbitrary potential $U(q_1,q_2)$:
\begin{equation}
\label{HH}
H(q_1,q_2,p_1,p_2)=\frac{1}{2}(p_{1}^2+p_{2}^2)+U(q_1,q_2).
\end{equation}

\begin{figure}[t]
\begin{center}
\includegraphics[width=12cm,height=7cm]{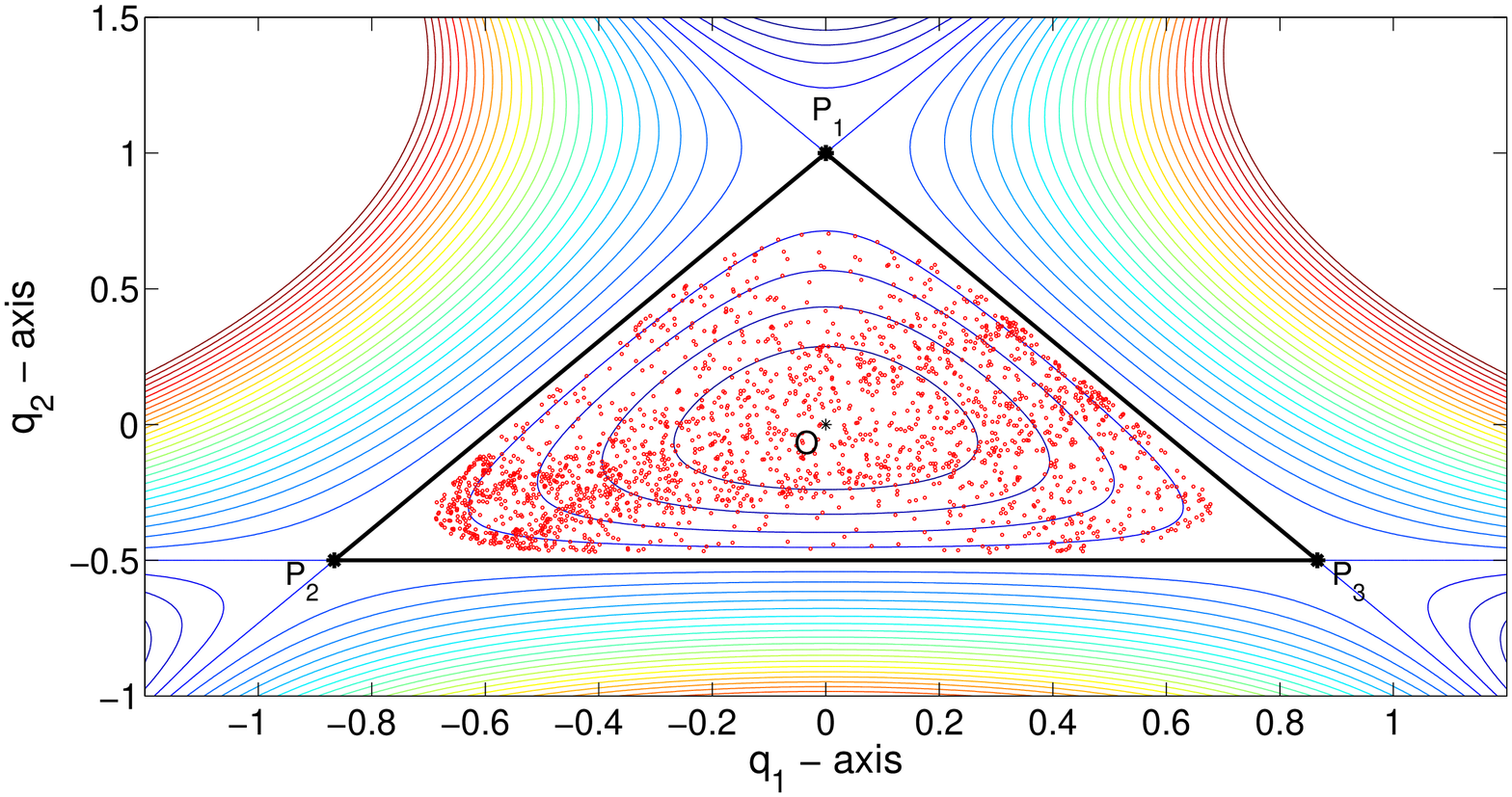}
\end{center}
\caption{Level curves of the potential $U(q_1,q_2)$ of the
H\'enon-Heiles problem (see \eqref{Henon_potential}). The origin $O$
is a stable equilibrium point, whose domain of stability contains
the equilateral triangle having as vertices the saddle points $P_1$,
$P_2$, and $P_3$, provided that the total energy does not exceed the
value $\frac{1}{6}$. Inside the triangle, a numerical trajectory
(small dots) computed by  the sixth-order method of the second type
with stepsize $h=0.25$ and in the time interval $[0, 500]$, is
traced: its total energy is $0.15$.}\label{henon_fig1}
\end{figure}

In particular, for their experiments they chose
\begin{equation}
\label{Henon_potential}
U(q_1,q_2)=\frac{1}{2}(q_{1}^2+q_{2}^2)+q_{1}^2q_{2}-\frac{1}{3}q_{2}^3,
\end{equation}
which makes the Hamiltonian function a polynomial of degree three.
When $U(q_1,q_2)$ approaches the value $\frac{1}{6}$, the level
curves of $U$ tend to an equilateral triangle, whose vertices are
saddle points of $U$ (see Figure \ref{henon_fig1}). This vertices
have coordinates $P_1=(0,1)$,
$P_2=(-\frac{\sqrt{3}}{2},-\frac{1}{2})$ and
$P_3=(\frac{\sqrt{3}}{2},-\frac{1}{2})$.

Since $U$ in \eqref{HH} has no symmetry in general, we cannot
consider the angular momentum as an invariant anymore, so that the
only known first integral is the total energy represented by
\eqref{HH} itself, and the question is whether or not a second
integral does exist. H\'enon and Heiles conducted a series of tests
with the aim of giving a numerical evidence of the existence of such
integral for moderate values of the energy $H$, and of the
appearance of chaotic behavior when $H$ becomes larger than a
critical value: it is believed that for values of $H$ in the
interval $(\frac{1}{8},\frac{1}{6})$ this second first integral does
not exist (see also \cite[Section I3]{HLW}).

We consider the initial point
$P_0=(q_{10},\,q_{20},\,p_{10},\,p_{20})=(0,\, 0,\,
\sqrt{\frac{3}{10}},\, 0)$ which confers on the system a total
energy $H=0.15 \in(\frac{1}{8},\frac{1}{6})$. Therefore the orbit
originating from $P_0$ will never abandon the triangle for any value
of the time $t$. We have integrated problem \eqref{HH} in the time
interval $[0,\,500]$ with stepsize $h=0.25$ by using the Gauss
method of order six  and its conservative variant of the second
type. Figure \ref{fig2.henon} shows the errors in the Hamiltonian
function $H$ in both cases.
\begin{figure}[ht]
\begin{center}
\includegraphics[width=13cm,height=8cm]{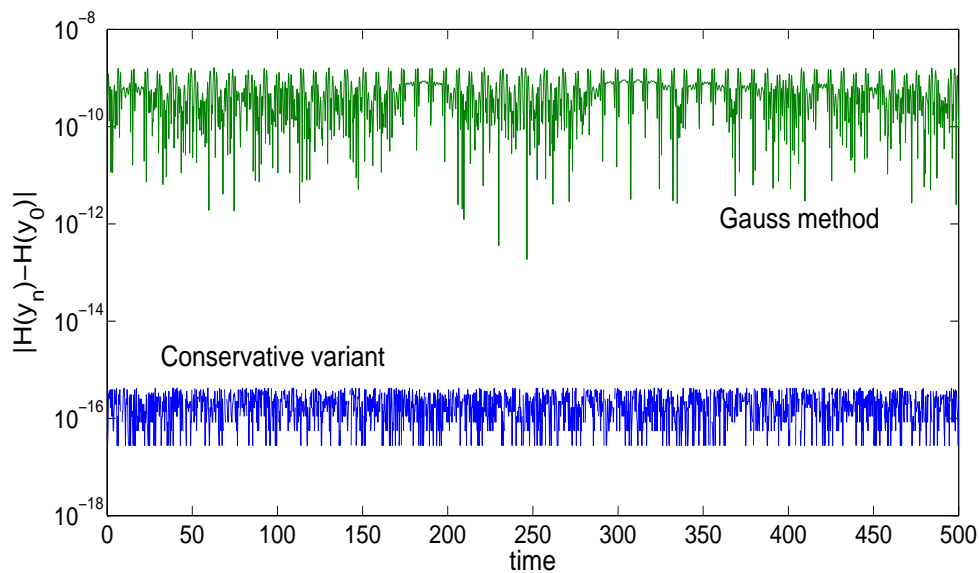}
\end{center}
\caption{Errors in the Hamiltonian function
\eqref{HH}-\eqref{Henon_potential} evaluated along the numerical
solution generated by the Gauss method of order six and its
conservative variant of the second type. Stepsize: $h=0.25$; time
interval: $[0, 500]$; initial condition
$(q_{10},\,q_{20},\,p_{10},\,p_{20})=(0,\, 0,\,
\sqrt{\frac{3}{10}},\, 0)$.} \label{fig2.henon}
\end{figure}

\section{Conclusions}
We have defined a new class of symmetric and symplectic one-step
methods of any high order that, under somewhat weak assumptions,
are capable to compute a numerical solution along which the
Hamiltonian function is precisely conserved. This feature has been
realized by first introducing a symplectic parametric perturbation
of the Gauss method, and then by selecting the parameter, at each
step of the integration procedure, in order to get energy
conservation. A relevant implication of the symplectic nature of
each formula is the conservation of all quadratic first integrals
associated to the system. With the help of the implicit function
theorem, we have shown that not only do these methods exist, but
that the correction required on the Gauss method is so small that
the order of convergence of this latter method is  preserved by
its conservative variant. A few test problems have been reported
to confirm the theoretical results presented, and to show the
effectiveness of the new formulae.

This approach opens a number of interesting routes of investigation.
First of all, if preferred, the parameter could be selected in such
a way to impose the conservation of other non quadratic first
integrals different from the Hamiltonian function itself. More
generally, the multi-parametric generalization introduced suggests
the possibility of choosing the free parameters in order to impose
the conservation of a number of functionally independent first
integrals possessed by the continuous problem. Last but not least,
the idea of considering symplectic corrections of the Gauss method
could  be in principle  extended to other classes of symplectic
methods known in the literature. The above described lines of
investigation, as well as the efficient solution of the nonlinear
systems arising from the conservation requirements, will be the
subject of future researches.


\begin{thebibliography}{99}


\bibitem{AR} {\sc U. Ascher, S.  Reich}, {\em On some difficulties in integrating highly
oscillatory Hamiltonian systems}, in Computational Molecular
Dynamics, Lect. Notes Comput. Sci. Eng.  4, Springer, Berlin, 1999,
pp. 281--296.


\bibitem{BIT0}  {\sc L. Brugnano, F. Iavernaro, D. Trigiante},
{\em The Hamiltonian BVMs (HBVMs) Homepage}, arXiv:1002.2757, also
available at url:
\url{http://web.math.unifi.it/users/brugnano/HBVM/}.


\bibitem{BIT} {\sc L. Brugnano, F. Iavernaro, and D. Trigiante}, {\em Analisys of Hamiltonian
Boundary Value Methods (HBVMs): a class of energy-preserving
Runge-Kutta methods for the numerical solution of polynomial
Hamiltonian dynamical systems}, (2009), submitted.
(arXiv:0909.5659)

\bibitem{BIT1} {\sc L. Brugnano, F. Iavernaro, and  D. Trigiante},
Hamiltonian Boundary Value Methods (Energy Preserving Discrete
Line Integral Methods), Jour. of Numer. Anal. Industr. and Appl.
Math. 5 (2010), no. 1-2, pp. 17--37.  (arXiv:0910.3621)

\bibitem{BIT3}  {\sc L. Brugnano, F. Iavernaro, and D. Trigiante}, {\em Isospectral
Property of HBVMs and their connections with Runge-Kutta collocation
methods}, Preprint (2010).  (arXiv:1002.4394)


\bibitem{BT1} {\sc L. Brugnano and  D. Trigiante}, {\em Solving Differential Problems
by Multistep Initial and Boundary Value Methods}, Gordon and Breach
Science Publ., Amsterdam, 1998.


\bibitem{CFM} {\sc P.  Chartier, E.  Faou, and   A.  Murua}, {\em An algebraic approach to
invariant preserving integrators: the case of quadratic and
Hamiltonian invariants}, Numer. Math., {103}  (2006), no. 4,  pp.
575--590.

\bibitem{GM} {\sc Z.  Ge and  J.E.  Marsden}, {\em Lie-Poisson Hamilton-Jacobi theory and
Lie-Poisson integrators}, Phys. Lett. A,  133 (1988), pp. 134--139.

\bibitem{G} {\sc O.  Gonzalez}, {\em Time integration and discrete Hamiltonian systems},
J. Nonlinear Sci., {6} (1996), pp. 449--467.

\bibitem{Ha}  {\sc E. Hairer}, {\em Energy-preserving variant of collocation
methods}, J. Numer. Anal. Ind. Appl. Math.,  to appear.


\bibitem{H} {\sc E.  Hairer}, {\em Symmetric projection methods for differential equations on manifolds},
BIT {40} (2000), pp. 726--734.

\bibitem{HLW} {\sc E. Hairer, C. Lubich, and G. Wanner}, {\em Geometric
Numerical Integration. Structure-Preserving Algorithms for Ordinary
Differential Equations}, Second ed., Springer, Berlin, 2006.

\bibitem{HW} {\sc E. Hairer and G. Wanner}, {\em Solving Ordinary
Differential Equations II. Stiff and Differential- Algebraic
Problems}, Second ed., Springer Series in Computational Mathematics
 14, Springer-Verlag Berlin, 1996.

\bibitem{HH} {\sc M.  H\'{e}non and C.   Heiles},
{\em The Applicability of the Third Integral of Motion: Some
Numerical Experiments}, Astron. J., {69}  (1964), no. 1, pp. 73--79.


\bibitem{IP1} {\sc F. Iavernaro and  B. Pace}, {\em $s$-Stage Trapezoidal Methods for the
Conservation of Hamiltonian Functions of Polynomial Type}, AIP Conf.
Proc., 936 (2007), pp. 603--606.


\bibitem{IT3} {\sc F. Iavernaro and D. Trigiante}, {\em High-order symmetric schemes for the energy
conservation of polynomial Hamiltonian problems}, J. Numer. Anal.
Ind. Appl. Math., {4}  (2009), no. 1-2, pp. 87--111.

\bibitem{KP} {\sc S.~G.  Krantz and  H.~R.  Parks},  {\em The implicit function
theorem. History, theory, and applications}, Birkhäuser Boston,
Inc., Boston, MA, 2002.

\bibitem{LR} {\sc B.  Leimkuhler and S.  Reich}, {\em Simulating Hamiltonian Dynamics}, Cambridge
Monographs on Applied and Computational Mathematics 14, Cambridge
University Press, Cambridge, 2004.

\bibitem{MaPr}   {\sc A.~J.  Maciejewski and  M.  Przybylska}, {\em Darboux Polynomials and First Integrals of
Natural Polynomial Hamiltonian Systems}, Phys. Lett. A 326 (2004),
no. 3-4, pp. 219--226.


\bibitem{MQR} {\sc R.~I.  McLachlan, G.~R.~W.   Quispel, and N.  Robidoux},
{\em Geometric integration using discrete gradient}, Phil. Trans. R.
Soc. Lond. A, 357 (1999), pp. 1021--1045.


\bibitem{SC} {\sc J.~M.   Sanz-Serna and M.P.  Calvo}, {\em Numerical Hamiltonian Problems},
Chapman \& Hall, London, 1994.


\bibitem{SLSF} {\sc N. Sidorov, B. Loginov, A. Sinitsyn, and M.  Falaleev},
{\em Lyapunov-Schmidt methods in nonlinear analysis and
applications}, Mathematics and its Applications, 550, Kluwer
Academic Publishers, Dordrecht, 2002.

\end{thebibliography}
\end{document}